\newtheorem{proposition}{Proposition}
\newtheorem{theorem}{Theorem}[section]
\newtheorem{corollary}[theorem]{Corollary}
\newtheorem{lemma}[theorem]{Lemma}
{\theoremstyle{definition}
\newtheorem{definition}{Definition}[section]
 
 }
 {\theoremstyle{remark}
\newtheorem{remark}{Remark}
 
}
\newcommand{\G}{{\operatorname{G}_2}}
\newcommand{\lG}{\check{\operatorname{G}}_2}
\newcommand{\lS}{\check{\operatorname{S}}_2}
\newcommand{\gc}{{h}}
\newcommand{\fs}{{W}}
\newcommand{\cyl}{\mathfrak C}
\newcommand{\bl}{\mathcal I}
\newcommand{\bfs}{{\lG\fs}}
\newcommand{\lF}{\check{F}}
\newcommand{\trace}{\operatorname{trace}}
\newcommand{\Id}{\operatorname{Id}}
\newcommand{\vect}{\operatorname{span}}
\newcommand{\Hol}{\operatorname{Hol}}
\newcommand{\Lip}{\operatorname{Lip}}
\newcommand{\D}{\mathbb D}
\renewcommand{\j}{\mathfrak i}
\def\<{\left\langle}
\def\>{\right\rangle}
\def\dif{\text{ d}} %% element differentiel
\def\<{\left\langle}
\def\>{\right\rangle}
\def\R{\mathbf R}
\def\car{\mathbf 1}
\def\V{V}
\newcommand{\esp}[1]{\mathbf E\left[#1\right]}
\def\P{\mathbf P}
\def\wB{B}
\def\wX{X}
\def\ws{W}%% Wiener space
\def\fs{{W}}%fractional Sobolev spaces
\def\I{{\mathcal I}} %% Fractional spaces
\def\c1var{{C^{1-\text{var}}}}
\def\distKR{\text{dist}_{\text{\textsc{kr}}}}
\def\eB{{\mathbb B}}%enriched BM
\def\cA{{\mathcal A}}%{1,...,d}x{1,...,m}
\def\fV{W}
\def\fH{{H}}
\begin{document}
\title{Stein's method for rough paths}
\author{L. Coutin \and L. Decreusefond}
\address{Institut Math\'ematique de Toulouse, Universit\'e
  P. Sabatier, Toulouse, France}
\email{coutin@math.univ-toulouse.fr}
\address{LTCI, Telecom ParisTech, Universit\'e Paris-Saclay, 75013, Paris,
  France}
\email{laurent.decreusefond@telecom-paristech.fr}
\subjclass[2000]{60F17}
\date{}
\keywords{Donsker theorem, rough paths, Stein method}

\begin{abstract}
  The original Donsker theorem says that a standard random walk converges in
  distribution to a Brownian motion in the space of continuous functions. It has
  recently been extended to enriched random walks and enriched Brownian motion.
  We use the Stein-Dirichlet method to precise the rate of this convergence  in the
  topology of fractional Sobolev spaces.
\end{abstract}

\maketitle{}

\section{Introduction}
\label{sec:introduction}

The Donsker theorem says that a random walk
\begin{equation*}
  X^m(t)=\frac{1}{\sqrt{m}}\sum_{k=1}^{[mt]} X_k
\end{equation*}
where the $X_k$'s are independent, identically distributed random variables with
mean $0$ and variance $1$, converges in a functional space to the Brownian
motion $B$. In the original version (see \cite{donsker_invariance_1951}), the
convergence was proved to hold in the space of continuous functions. The first
evolution was the paper of Lamperti \cite{lamperti_convergence_1962}, which
proved the convergence in H\"older spaces. Namely, he stated that if the
increments of the $X_k$'s are $p$-integrable then the random walk $X^m$
converges to $B$ in $\Hol(1/2-1/p)$. The higher the integrability, the stronger
the topology. There are numerous other extensions which can be made to the
Donsker theorem. In the 90s, Barbour \cite{barbour_steins_1990} estimated the
rate of convergence in the space $\mathcal C$ of continuous functions on $[0,1]$
equipped with a stronger topology than the usual sup-norm topology. He proved
that, provided that $\esp{|X_k|^3}$ is finite, then
\begin{equation*}
  \sup_{\|F\|_M\le 1} \esp{F(X^m)}-\esp{F(B)}\le c\, \frac{\log m}{\sqrt{m}},
\end{equation*}
where $M$ is roughly speaking, the set of thrice Fr\'echet differentiable
functions on $\mathcal C$ with bounded derivatives and $\|F\|_M$ is a function of
the supremum of $D^i F,\, i=0,\cdots,3$ over $\mathcal C$. The strategy is to
compare $X^m$ with $B^m$, the affine interpolation, of mesh $1/m$, of the
Brownian motion and then to compare $B^m$ with $B$. The latter comparison is a
sample-path comparison since the two processes live in the same probability
space. This is the part which yields the $\log m$ factor. The former comparison
is done via the Stein's method in finite dimension. The rate of this convergence
is as usual (see~\cite{barbour_introduction_2005,nourdin_normal_2012}) for
Gaussian limits, of the order of $m^{-1/2}$.

In \cite{coutin_steins_2013}, we quantified the rate of convergence of $X^m$
towards $B$ in Besov-Liouville spaces (see \eqref{normedansIap}, which are one
scale of fractional Sobolev spaces. The spaces we considered were not included
in H\"older spaces but the method could  be adapted to obtain convergence
rate in H\"older spaces. However, it would not fit to the present context where
we are considering enriched-paths (see definition below).

Note that in both \cite{barbour_steins_1990} and \cite{coutin_steins_2013}, an
higher integrability of the $X_k$'s would not improve the convergence rates but
would give more flexibility on the choice of the topology in which the
convergence holds: The higher the integrability, the higher the H\"older
exponent may be chosen.

In \cite[Theorem 13.3.3]{friz_multidimensional_2010}, Friz and Victoir
essentially showed that a Lamperti's like result holds for the convergence of
the enriched random walk in the sense of rough path to the enriched Brownian
motion.

The motivation of this paper is to quantify the rate of this convergence in
rough-paths sense. The first difficulty is that the limiting process is no
longer a Gaussian process: The L\'evy area of a Brownian motion is not Gaussian.
Hence we cannot expect to have a direct application of the Stein's method.
However, there is no more randomness in the L\'evy area that there is in the
Brownian motion itself: The L\'evy area is adapted to the filtration generated
by the underlying Brownian motions. Saying that has two consequences. First,
that the probability space we have to consider depends only on the Brownian
motion. Moreover, we have to find functional spaces for which the map which
sends a Brownian motion to its L\'evy area is not only continuous but also
Lipschitz. As mentioned in \cite{friz_variation_2005}, the Besov-Liouville
spaces (we used in \cite{coutin_steins_2013}) are not well fitted to deal with
the iterated integral processes we encounter in rough-paths theory. It is much
better to work with the Slobodetsky scale of fractional Sobolev spaces.

Once the functional framework is set up, in order to avoid some complicated
calculations in infinite dimensional spaces, the idea is to go back to the
approach of \cite{barbour_steins_1990}: Comparing the random walk with the
affine interpolation of the Brownian motion in the Slobodetsky scale of
fractional Sobolev spaces. This can be done by an application of the Stein's
method in finite dimension. The novelty comes from the treatment of the iterated
integrals whose existence hugely complicates the computations of the
remainders. The final result is obtained by considering the known distance
between the enriched affine interpolation and the enriched Brownian motion in
fractional Sobolev spaces.

Rough paths theory is essentially a deterministic theory, it is therefore
tempting to make the estimate we need in a deterministic setting and then to
take the expectation of these bounds. This turns out to be a misleading
approach. For instance, consider a sequence of centered, independent identically
distributed random variables $(X_n,\, n\ge 1)$ and let $S_n=\sum_{i=1}^n X_i$.
If we evaluate the $p$-th moment of $S_n$ with H\"older inequality, we get that
this moment is bounded by a constant times $n^p$. But if we use, as in the
sequel, the Burkholder-Davis-Gundy inequality for discrete time martingale, we
get an upper-bound proportional to $n^{p/2}$. This martingale argument which is
implicitly used in \cite{lamperti_convergence_1962} is the key to our work. For
the sake of simplicity, this implies to separate the treatment of the symmetric
and anti-symmetric parts of the signature. However, this is of no real
importance since, as detailed below, the symmetric part of the signature can be
handled as a classical $\R^d$-valued process.

This paper is organized as follows: In Section \ref{sec:preliminaries}, we give
the necessary notions about fractional Sobolev spaces and rough-paths theory. We
also give a detailed proof of Lamperti's result in the fractional Sobolev spaces
scale for further use and comparison. In Section \ref{sec:rate-convergence-1},
we then define the Kolmogorov-Rubinstein distance and show that this distance
between the random walk and the affine interpolation of the Brownian motion can
be reduced to a problem in finite dimension, should we consider a special set of
Lipschitz functions. In Section~\ref{sec:impr-stein-meth}, we then present our
development of the Stein-Dirichlet method to estimate this distance.

\section{Preliminaries}
\label{sec:preliminaries}

\subsection{Fractional Sobolev spaces}
\label{sec:fract-sobol-spac}
As in \cite{decreusefond_stochastic_2005,friz_multidimensional_2010}, we consider the fractional
Sobolev spaces $\fs_{\eta,p}$ defined for $\eta \in (0,1)$ and $p\ge 1$
as the the closure of ${\mathcal
  C}^1$ functions with respect to the norm
\begin{equation*}
  |f|_{\eta,p}^p=\int_0^1 |f(t)|^p \dif t + \iint_{[0,1]^2}
  \frac{|f(t)-f(s)|^p}{|t-s|^{1+p\eta}}\dif t\dif s.
\end{equation*}
For $\eta=1$, $\fs_{1,p}$ is  the completion of $\mathcal C^1$ for
the norm:
\begin{equation*}
   |f|_{1,p}^p=\int_0^1 |f(t)|^p \dif t + \int_0^1 |f^\prime(t)|^p \dif t.
\end{equation*}
They are known to be Banach spaces and to satisfy the Sobolev embeddings \cite{adams_sobolev_2003,feyel_fractional_1999}:
\begin{equation*}
  \fs_{\eta,p}\subset \Hol(\eta-1/p) \text{ for } \eta-1/p>0
\end{equation*}
and
\begin{equation*}
   \fs_{\eta,p}\subset  \fs_{\gamma,q}\text{ for } 1\ge \eta\ge \gamma  \text{
     and } \eta-1/p\ge \gamma-1/q.
 \end{equation*}
 As a consequence, since $\fs_{1,p}$ is separable (see
 \cite{brezis_analyse_1987}), so does  $\fs_{\eta,p}$. We need to compute the
 $\fs_{\eta,p}$ norm of primitive of step functions.
\begin{lemma}
 \label{lem:normeHDansDual}
 % For  $k\in \{0,\cdots,m-1\}$, let
 % \begin{equation*}
 %   h_{k}^{m}=\sqrt{m}\ I^1(\car_{[k/m,\,(k+1)/m)}).
 % \end{equation*}
 Let $0\le s_{1} < s_{2}\le 1$ and consider
 \begin{equation*}
   h_{s_{1},s_{2}}(t)=\int_{0}^{t} \car_{[s_{1},s_{2}]}(r)\dif r.
 \end{equation*}
 There exists $c>0$ such that for any $s_{1},s_{2}$, we
  have
  \begin{equation}
    \label{eq_donsker:3}
     \|h_{s_{1},s_{2}}\|_{\fs_{\eta,p}} \le c\, |s_{2}-s_{1}|^{1-\eta}.
    % \label{eq:1}
    % \|h_{k}^{m }\|_{\fs_{\eta,p}^*}& \le \frac{c}{\sqrt{m}}\cdotp
  \end{equation}
\end{lemma}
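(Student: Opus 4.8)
The plan is to estimate the $\fs_{\eta,p}$-norm directly from its definition, using two elementary pointwise bounds on the ramp function $h=h_{s_{1},s_{2}}$. Write $\delta=s_{2}-s_{1}$. Then $h$ vanishes on $[0,s_{1}]$, equals $t-s_{1}$ on $[s_{1},s_{2}]$, and equals $\delta$ on $[s_{2},1]$; in particular $0\le h\le \delta$, $h$ is $1$-Lipschitz, and it has total variation $\delta$, so that for all $s,t\in[0,1]$
\begin{equation*}
  |h(t)-h(s)|\le \min\bigl(|t-s|,\delta\bigr).
\end{equation*}
This two-sided bound is the whole point of the proof: the Lipschitz estimate controls the diagonal, the bound by $\delta$ controls large distances, and together they produce the homogeneity $\delta^{1-\eta}$. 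Since $h_{s_{1},s_{2}}$ is Lipschitz it already belongs to $\fs_{1,p}\subset\fs_{\eta,p}$, so the norm is finite and the computation below is legitimate.

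For the first term of $|h|_{\eta,p}^{p}$ I would simply write $\int_{0}^{1}|h(t)|^{p}\dif t\le \int_{0}^{1}\delta^{p}\dif t=\delta^{p}\le \delta^{p(1-\eta)}$, using $\delta\le 1$ and $0<\eta<1$. For the Gagliardo double integral I would bound the integrand by $\min(|t-s|,\delta)^{p}/|t-s|^{1+p\eta}$, integrate first in $s$ for fixed $t$, enlarge the domain to $s\in\R$ after the substitution $u=t-s$, and split at $|u|=\delta$:
\begin{equation*}
  \int_{\R}\frac{\min(|u|,\delta)^{p}}{|u|^{1+p\eta}}\dif u
  =2\int_{0}^{\delta}u^{p(1-\eta)-1}\dif u+2\delta^{p}\int_{\delta}^{\infty}u^{-1-p\eta}\dif u
  =\Bigl(\frac{2}{p(1-\eta)}+\frac{2}{p\eta}\Bigr)\delta^{p(1-\eta)}.
\end{equation*}
Both integrals converge precisely because $\eta\in(0,1)$ (the first exponent is $>-1$, the second is $<-1$), and the outcome is homogeneous of degree $p(1-\eta)$ in $\delta$. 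Integrating this bound over $t\in[0,1]$ contributes only the factor $1$, so the double integral is at most $c(\eta,p)\,\delta^{p(1-\eta)}$.

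Adding the two contributions gives $|h_{s_{1},s_{2}}|_{\eta,p}^{p}\le (1+c(\eta,p))\,\delta^{p(1-\eta)}$, and taking $p$-th roots yields \eqref{eq_donsker:3}. The case $\eta=1$ is even more direct: since $h'=\car_{[s_{1},s_{2}]}$ one gets $|h|_{1,p}^{p}\le\delta^{p}+\delta\le2$, which matches $\delta^{1-\eta}=1$. I do not expect any genuine obstacle here; the only things to be careful about are not dropping either half of the increment bound (dropping $\le\delta$ makes the integral diverge at infinity, dropping the Lipschitz bound makes it diverge on the diagonal) and checking that the split integral is finite exactly on the admissible range of exponents.
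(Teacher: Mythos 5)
Your proof is correct and follows exactly the paper's route: the key increment bound $|h(t)-h(s)|\le |t-s|\wedge(s_{2}-s_{1})$ is precisely the one the paper records, and your splitting of the Gagliardo integral at $|u|=\delta$ is just the computation the paper leaves implicit in ``the result then follows from the definition of the $\fs_{\eta,p}$ norm.'' No gaps.
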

\begin{proof}
     Remark that for any $s,t\in [0,1]$,
  \begin{equation*}
    \left| h_{s_{1},s_{2}} (t)-h_{s_{1},s_{2}}(s) \right|\le |t-s|\wedge (s_{2}-s_{1}).
  \end{equation*}
  The result then follows from the definition of the $\fs_{\eta,p}$
    norm.
  % As to Eqn.~\eqref{eq:1}, in view of \eqref{eq_donsker:10}, it is sufficient to compute
  % \begin{align*}
  %   \|h_k^m\|_{\bl_{\eta',p}^*}&=
  %   \|h_k^m\|_{\bl_{-\eta',q}}=
  %  \sqrt{m}\, \|I^{1+\eta'}_{0^+}(\car_{[k/m,\,(k+1)/m)})\|_{L^{p^*}}\\
  %                             &=\frac{\sqrt{m}}{\Gamma(1+\eta')}\left( \int_0^1
  %     \left( \int_{i/m}^{t\wedge (i+1)/m} (t-s)_+^{\eta'}\dif s
  %     \right)^{p^*}\dif t \right)^{1/p^*},
  % \end{align*}
  % for some $\eta'>\eta$, where $p^*$ is the conjugate of $p$. Since $\eta'>0$,
  % $|t-s|^{\eta'}\le 1$ and   the result follows.
\end{proof}
 We also need to introduce the Riemann-Liouville fractional spaces for the
 construction of abstract Wiener spaces. For $f\in L^1([0,1];\ dt),$ (denoted by $L^1$ for short) the left
and right fractional integrals of $f$ are defined by~:
        \begin{align*}
          (I_{0^+}^{\gamma}f)(x) & = 
          \frac{1}{\Gamma(\gamma)}\int_0^xf(t)(x-t)^{\gamma-1}\dif t\ ,\ 
          x\ge
          0,\\
          (I_{1^-}^{\gamma}f)(x) & = 
          \frac{1}{\Gamma(\gamma)}\int_x^1f(t)(t-x)^{\gamma-1}\dif t\ ,\ 
          x\le 1,
        \end{align*}
        where $\gamma>0$ and $I^0_{0^+}=I^0_{1^-}=\Id.$ For any
        $\gamma\ge 0$, $p,q\ge 1,$ any $f\in L^p$ and $g\in L^q$ where
        $p^{-1}+q^{-1}\le \gamma$, we have~:
\begin{equation}
  \label{int_parties_frac}
  \int_0^1 f(s)(I_{0^+}^\gamma g)(s)\dif s = \int_0^1 (I_{1^-}^\gamma 
f)(s)g(s)\dif s.
\end{equation}
The Besov-Liouville space $I^\gamma_{0^+}(L^p):= \I_{\gamma,p}^+$ is
usually equipped with the norm~:
\begin{equation}
\label{normedansIap}
  \|  I^{\gamma}_{0^+}f \| _{ \bl_{\gamma,p}^+}=\| f\|_{L^p}.
\end{equation}
Analogously, the Besov-Liouville space $I^\gamma_{1^-}(L^p):= \bl_{\gamma,p}^-$ is
usually equipped with the norm~:
\begin{equation*}
  \| I^{-\gamma}_{1^-}f \| _{ \bl_{\gamma,p}^-}=\|  f\|_{L^p}.
\end{equation*}

It is proved in \cite{Feyel1998} that for $1\ge a>b>c>0$ that  the
following embeddings are continuous (even compact)
\begin{equation*}
  \label{eq:17}
 \fs_{a,p}\subset \bl_{b,p}^+\subset \fs_{c,p}.
\end{equation*}
% It follows that
% \begin{equation}
%   \label{eq_donsker:10}
%   \fs_{c,p}^{*}\subset \bl_{-b,p^{*}}^-\subset \fs_{a,p}^{*}.
% \end{equation}
% \begin{lemma}
%  \label{lem:normeHDansDual1}
%  Let $0\le s_{1} < s_{2}\le 1$ and consider
%  \begin{equation*}
%    h_{s_{1},s_{2}}(t)=\int_{0}^{t} \car_{(s_{1},s_{2}]}(r)\dif r.
%  \end{equation*}
%  There exists $c>0$ such that for any $s_{1},s_{2}$, we
%   have
% \begin{equation}
%     \label{eq:1}
%     \|h_{s_{1},s_{2}}\|_{\bl_{\eta,p}^*}& \le c\, |s_{2}-s_{1}|.
%   \end{equation}
% \end{lemma}
% \begin{proof}
% We have to compute
%   \begin{align*}
%     \|h_{s_1,s_2}\|_{\bl_{\eta,p}^*}&=
%     \|h_{s_1,s_2}\|_{\bl_{-\eta,p^{*}}}=
%  \|I^{1+\eta}_{0^+}(\car_{(s_{1},s_{2}]})\|_{L^{p^*}}\\
%                               &=\frac{1}{\Gamma(1+\eta)}\left( \int_0^1
%       \left( \int_{s_{1}}^{s_{2}} (t-s)_+^{\eta}\dif s
%       \right)^{p^*}\dif t \right)^{1/p^*},
%   \end{align*}
%   for some $\eta'>\eta$, where $p^*$ is the conjugate of $p$. Since $\eta>0$,
%   $|t-s|^{\eta}\le 1$ and   the result follows.
% \end{proof}
\subsection{Rough paths}
\label{sec:rough-paths}
We give a quick introduction to the rough-paths theory. For details, we
refer to the monograph \cite{friz_multidimensional_2010}. Consider $T^2(\R^d)$,
the graded algebra of step two:
\begin{equation*}
  T^2(\R^d)=\R \oplus \R^d \oplus (\R^d\otimes\R^d).
\end{equation*}
 We endow $T^2(\R^d)$ with an algebra structure $(+,., \otimes)$
 where for all $(w_0,w_1,w_2)$,  $(z_0,z_1,z_2)\in T^2({\R}^d),~\lambda \in {\R}$
\begin{align*}
(w_0,w_1,w_2)+ (z_0,z_1,z_2)&= (w_0+z_0,\ w_1+z_1,\ w_2+z_2)\\
\lambda.(w_0,w_1,w_2)&= (\lambda w_0,\lambda w_1,\lambda w_2)\\
  (w_0,w_1,w_2)\otimes (z_0,z_1,z_2) &= ( w_0z_0,\  w_0z_1+z_0w_1, \  w_0z_2+z_0w_2 + w_1\otimes z_1).
\end{align*}
Introduce the projection maps: For $i=0,1,2$
\begin{align*}
  \pi_i\, :\, T^2(\R^d)& \longrightarrow (\R^d)^{\otimes i}\\
  (w_0,w_1,w_2)&\longmapsto w_i,
\end{align*}
The set
\begin{multline*}
  1+t^2({\R}^d)=\{w\in T^2(\R^d),\, \pi_0(w)=1\}\\ =\left\{g=(1,w_1,w_2),\ (w_1,w_2) \in {\R}^d \oplus (\R^d\otimes\R^d)\right\}
\end{multline*}
is a Lie group with respect to the tensor multiplication~$\otimes,$
\cite[Prop. 7.17]{friz_multidimensional_2010}. Note that
$$ (1,w_1,w_2)^{-1}= (1,-w_1, -w_2 + w_1\otimes w_1).$$   As usual, a Lie
group, like $ 1+t^2({\R}^d)$, leads to a Lie algebra when equipped with  notions
of product and commutator. Here, the Lie algebra is $(t^2(\R^d),+,.)$ with product
$\otimes$ and commutator
\begin{multline*}
  [g,w]=g\otimes w-w\otimes g=0\ \oplus \ \bigl( \pi_1(g)\otimes\pi_1(w)-\pi_1(w)\otimes\pi_1(g) \bigr),\\ \text{ for any } w,g \in t^2(\R^d).
\end{multline*}
Denote by $(e_i,\, 1\le i\le d)$ the canonical basis of $\R^d$, so
that $(e_i\otimes e_j,\, 1\le i,j\le d)$ is the canonical basis of
$\R^d\otimes \R^d$. Then
\begin{multline*}
  [\sum_{i=1}^d a_i \, e_i+ \sum_{i,j=1}^d c_{i,j}\,  e_i \otimes e_j,\  \sum_{i=1}^d b_i\, e_i+ \sum_{i,j=1}^d f_{i,j} \,e_i \otimes e_j]\\ = \sum_{i<j} (a_i b_j-a_jb_i)\,[e_i,e_j].
\end{multline*}
The exponential and logarithm maps are useful to go  back and forth between
$t^2(\R^d)$ and $1+t^2(\R^d)$:
\begin{align*}
  \exp \, :\, t^2(\R^d)&\longrightarrow 1+t^2(\R^d)\\
  w &\longmapsto 1+w+\frac12 (\pi_1w)^{\otimes 2}
\end{align*}
and
\begin{align*}
  \log \, :\, 1+t^2(\R^d)&\longrightarrow t^2(\R^d)\\
  1+w&\longmapsto w-\frac12 (\pi_1w)^{\otimes 2}.
\end{align*}
We denote by $\Sigma$ the set
of finite partitions $\sigma=\{t_1,\cdots,t_n\}$ of $[0,1]$.
A  continuous path $z$ from $[0,1]$ into $\R^d$ is said to have $1$-finite
variation whenever
\begin{equation*}
  \sup_{\sigma = \{t_1,\cdots,t_n\}\in \Sigma} \sum_{i=1}^{n-1} |z_{t_{i+1}}-z_{t_i}|<\infty.
\end{equation*}
The set of such functions equipped with this quantity as a norm is
denoted by $\c1var$.
 \begin{definition}
The step-2 signature of $z\in \c1var$  is given by:
\begin{align*}
  S_2(z) \, :\, [0,1] &\longrightarrow 1+t^2({\R}^d)\\
t&\longmapsto  \left(1,\  z_t-z_0, \ \int_0^t (z_s-z_0) \otimes dz_s\right).
\end{align*}
The free nilpotent group of order 2, $G^2({\R}^d),$  is the closed subgroup of $1+t^2({\R}^d)$ defined by
$$ G^2({\R}^d)= \left\{S_2(z),\, z \in C^{1-Var}\right\}.  $$
We also consider $\lG({\R}^d)$ ($\lG$ for short since $d$ is fixed), the image of $\G({\R}^d)$ by the logarithm
map. For $z\in \c1var$, this corresponds to consider only the anti-symmetric
part of $\pi_2(S_2(z))$:
\begin{align*}
  \lS(z) \, :\, [0,1] &\longrightarrow t^2({\R}^d)\\
t&\longmapsto  \left(\  z_t-z_0, \ \int_0^t \Bigl[ (z_s-z_0) , \dif z_s \Bigr]\right).
\end{align*}
\end{definition}
\begin{remark}
If
\begin{equation*}
  z(t)=\sum_{i=1}^d\sum_{k=1}^m z_{ik} \,h_k(t)\ e_i
\end{equation*}
where
$(h_1,\cdots,h_m)$ are elements of $\c1var$, we have
\begin{multline}\label{eq_donsker:2}
   \log S_2(z)(t)=\biggl(\  z_t-z_0, \\
     \sum_{1\le i <j\le d}\sum_{1\le k < l\le m} (z_{ik}z_{jl}-z_{il}z_{jk})\left( \int_0^th_k(s)\dif
   h_l(s)-\int_0^t h_l(s)\dif h_k(s) \right) \ [e_i,e_j]\biggr).
\end{multline}
For the sake of notations, we set
$\cA=\{1,\cdots,d\}\times\{1,\cdots,m\}$ and define the $\prec $ relation by:
\begin{equation*}
  a=(a_1,a_2)\prec b=(b_1,b_2) \Longleftrightarrow (a_1<b_1) \text{ and } (a_2<b_2).
\end{equation*}
With these notations, Eqn.~\eqref{eq_donsker:2} then becomes
\begin{multline*}
   \log S_2(z)(t)=\biggl(\  z_t-z_0, \\
     \sum_{a\prec b} [z_{a},\,z_{b}]\left( \int_0^th_{a_2}(s)\dif
   h_{b_2}(s)-\int_0^t h_{b_2}(s)\dif h_{a_2}(s) \right) \ [e_{a_1},e_{b_1}]\biggr).
\end{multline*}

\end{remark}

The group  $G^2({\R}^d)$ has the structure of a sub-Riemannian
manifold. We will not dwell into the meanders of this very rich but
intricate structure. It suffices to say that we can proceed
equivalently by considering usual norms as follows.

For $\alpha \in (0,1)$, a path $w=1\oplus w_1\oplus w_2$  is
said to be $\alpha$-H\"older whenever
\begin{equation*}
\rho_{\alpha}(w)=\max\left(  \sup_{s\neq
    t}\frac{|w_1(t)-w_1(s)|}{|t-s|^\alpha},\
\sup_{s\neq
    t}\frac{|\pi_2(w(s)^{-1}\otimes w(t))|^{1/2}}{|t-s|^{\alpha}}\right)<\infty.
\end{equation*}
Note that for $z\in \c1var$,
\begin{equation*}
  \rho_\alpha(S_2(z))=\max\left(  \sup_{s\neq
    t}\frac{|z(t)-z(s)|}{|t-s|^\alpha},\
\sup_{s\neq
    t}\dfrac{\left|\int_s^t (z_r-z_s)\otimes \dif z_r\right|^{1/2}}{|t-s|^{\alpha}}\right).
\end{equation*}
\begin{definition}
  We denote by $H_\alpha(t^2(\R^d))$, the vector space of paths $w$
  from $[0,1]$ into $t^2(\R^d)$ such that $\rho_\alpha(w)$ is
  finite. It is equipped with the homogeneous norm: For $w$ and $v$ in $H_\alpha(t^2(\R^d))$
  \begin{equation*}
    \|w-v\|_{H_\alpha(t^2(\R^d))}=\rho_\alpha(w-v)=\rho_\alpha\bigl((w_1-v_1)\oplus(w_2-v_2)\bigr).
  \end{equation*}
\end{definition}
Unfortunately, as mentioned in \cite[Chapter 8.3]{friz_multidimensional_2010}, this
metric space is complete but not separable, which  is unacceptable for
our purpose (see Definition~\ref{def_donsker:1} and the remark below).
We thus introduce fractional Sobolev spaces as in \cite{friz_variation_2005}.
\begin{definition}
  For any $\eta\in (0,1)$, any $p\ge 2$, $\bfs_{\eta,p}$
  is the vector space of paths $w$ from $[0,1]$ into $t^2(\R^d)$ such that
  \begin{equation*}
  \|w_1\|^p_{\eta,p}+\iint_{[0,1]^2}\dfrac{\Bigl|\pi_2[w_s^{-1},\,w_t]\Bigr|^{p/2}}{|t-s|^{1+\eta p}}\dif s \dif t<\infty.
  \end{equation*}
  The distance on $\bfs_{\eta,p}$ is defined by
  \begin{multline*}
    \|w-v\|_{\bfs_{\eta,p}} = \|\pi_1(w)-\pi_1(v)\|_{\fs_{\eta,p}}\\
   +\left( \iint_{[0,1]^2}\dfrac{\Bigl|\pi_2[w_s^{-1},\,w_t]-\pi_2[v_s^{-1},\,v_t]\Bigr|^{p/2}}{|t-s|^{1+\eta p}}\dif s \dif t \right)^{1/p}.
  \end{multline*}
\end{definition}
Following \cite{friz_multidimensional_2010}, we know that $\bfs_{\eta,p}$
  is a Banach space included into $H_\alpha(t^2(\R^d))$ for
  $\alpha=\eta-1/p$, provided $\alpha >0$.
  \begin{lemma}
    \label{lem_rough_donsker:separability}
    For any $\eta\in (0,1)$, any $p\ge 2$, $\bfs_{\eta,p}$ is separable.
  \end{lemma}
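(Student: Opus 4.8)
The plan is to realise $\bfs_{\eta,p}$ as a metric subspace of a separable metric space and then to invoke the fact that a subspace of a separable metric space is again separable (inside each ball of a fixed countable dense family, pick one point of the subspace whenever that intersection is nonempty).

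Concretely, I would consider the map
$$\Phi\,:\,\bfs_{\eta,p}\longrightarrow \fs_{\eta,p}\times L^{p/2}\bigl([0,1]^2;\R^d\otimes\R^d\bigr),\qquad \Phi(w)=\bigl(\pi_1(w),\,K_w\bigr),$$
where $K_w(s,t):=|t-s|^{-2(1+\eta p)/p}\,\pi_2[w_s^{-1},w_t]$; since the diagonal of $[0,1]^2$ is Lebesgue-null this defines an honest element of $L^{p/2}$. The definition of $\bfs_{\eta,p}$ says precisely that $\pi_1(w)\in\fs_{\eta,p}$ and $K_w\in L^{p/2}([0,1]^2)$, so $\Phi$ is well defined, and comparing with the definition of $\|\cdot\|_{\bfs_{\eta,p}}$ one reads off, for all $w,v$,
$$\|w-v\|_{\bfs_{\eta,p}}=\|\pi_1(w)-\pi_1(v)\|_{\fs_{\eta,p}}+\|K_w-K_v\|_{L^{p/2}}^{1/2}.$$
Thus, equipping the target with $\delta\bigl((f,K),(f',K')\bigr)=\|f-f'\|_{\fs_{\eta,p}}+\|K-K'\|_{L^{p/2}}^{1/2}$, the map $\Phi$ is an isometry onto its image.

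It then remains to verify that $\delta$ is a distance inducing the product topology on $\fs_{\eta,p}\times L^{p/2}$ and that this product is separable. For the triangle inequality of $\delta$ the only nontrivial summand is the second one, which follows from the subadditivity of $x\mapsto x^{1/2}$ on $[0,\infty)$ together with the triangle inequality in $L^{p/2}$ — a genuine norm because $p\ge 2$ gives $p/2\ge 1$. As $x\mapsto x^{1/2}$ is moreover a homeomorphism of $[0,\infty)$ fixing $0$, the metric $(K,K')\mapsto\|K-K'\|_{L^{p/2}}^{1/2}$ induces the norm topology of $L^{p/2}$, so $\delta$ induces the product topology. Finally, $\fs_{\eta,p}$ is separable (as recalled above), $L^{p/2}([0,1]^2;\R^d\otimes\R^d)$ is separable since $L^{q}$ of Lebesgue measure on a cube is separable for every $q\in[1,\infty)$ (for instance, the functions that are constant with rational value on each cube of a dyadic partition form a countable dense family), and a finite product of separable spaces is separable.

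Combining everything, $\Phi(\bfs_{\eta,p})$ is a subset of the separable metric space $(\fs_{\eta,p}\times L^{p/2},\delta)$, hence separable, and since $\Phi$ is an isometry onto it, $\bfs_{\eta,p}$ is separable. I do not anticipate any real obstacle; the one point that deserves a line of care is that the $\bfs_{\eta,p}$-distance carries a square root on its second component, so one must observe that it still defines a metric generating the correct topology before transporting separability back through $\Phi$. A more rough-path-flavoured alternative would be to exhibit a countable dense set directly — say the logarithmic signatures $\lS(z)$ of dyadic piecewise-linear paths with rational vertices — but checking density in the $\bfs_{\eta,p}$-topology takes appreciably more effort than the embedding above.
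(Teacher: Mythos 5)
Your proposal is correct and follows essentially the same route as the paper: both realise $\bfs_{\eta,p}$ isometrically inside a product of a separable space with an $L^{p/2}$ space over $[0,1]^2$ and transport separability back, the only cosmetic difference being that you absorb the weight $|t-s|^{-1-\eta p}$ into the function $K_w$ whereas the paper keeps it in the measure $\mu_{2\eta,p/2}$. Your extra care in checking that the square-rooted second component still defines a metric inducing the right topology is a welcome detail the paper leaves implicit.
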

  \begin{proof}
    %Let $\Delta=\{(s,t)\in [0,1]^2,\  0\le s\le t \le 1\}$ and co
    Consider the map $\kappa$ defined as
    \begin{align*}
      \kappa\, :\, ([0,1]\rightarrow t_2(\R^d)) & \longrightarrow
                                                    ([0,1]^2\rightarrow
                                                    t_2(\R^d))\\
      w&\longmapsto \biggl((s,t)\mapsto\Bigl(\pi_1(w_t)-\pi_1(w_s),\,\pi_2 \bigl[ w_s^{-1},\, w_t\bigr]\Bigr)\biggr).
    \end{align*}
    Consider the measure $\dif\mu_{\eta,p}(s,t)=|t-s|^{-1-\eta p}\dif s \dif
    t$. Then,
    \begin{equation}\label{eq_rough_donsker:2}
      \|w\|_{\bfs_{\eta,p}}
      =\|\pi_1\circ \kappa(w)\|_{L^p(\mu_{\eta,p})}+\|\pi_2\circ \kappa(w)\|_{L^{p/2}(\mu_{2\eta,p/2})}^{1/2}.
    \end{equation}
    For any $p\ge 1$ and $\eta\in (0,1)$, $L^p(\mu_{\eta,p})$ is
    isometrically isomorphic to $L^p(\text{d}s\otimes  \dif t)$ hence it is
    separable. This entails that  $E=L^p(\mu_{\eta,p})\times
    L^{p/2}(\mu_{2\eta,p/2})$ is separable. Equation
    \eqref{eq_rough_donsker:2} means that the application $T$ which maps $w\in
    \bfs_{\eta,p}$ to the couple $(\pi_1\circ \kappa(w),\ \pi_2\circ
    \kappa(w) )\in E$, is an isometry. Thus $\bfs_{\eta,p}$ is
    isometrically isomorphic to a closed subspace of the separable
    space $E$, hence it is separable.
  \end{proof}

\subsection{Donsker-Lamperti theorem}
\label{sec:donsk-lamp-theor}

For the sake of completeness and for further comparison, we give the
proof of the Donsker-Lamperti theorem in the scale of fractional
Sobolev spaces, which induces the convergence in H\"older spaces.
\begin{definition}
The random walk associated to
the sequence $(X_k,\, k\ge 1)$ is defined by
\begin{equation*}
  X^m(t)=\sqrt{m}\ \sum_{k=1}^m X_k \, r_k^m(t) = \sum_{k=1}^m X_k \, h_{s_1,s_2}(t)
\end{equation*}
where
\begin{equation}
\label{eq_donsker:11}  r_k^m(t)=\int_0^t \car_{((k-1)/m,\ k/m]}(s)\dif s \text{ and }
  h_{s_1,s_2}=\sqrt{m}\ r_k^m.
\end{equation}
\end{definition}
\begin{theorem}
  \label{thm:majo_var}
  If for any $k\ge 1$, $X_k$ belongs to $L^{p}$ for some $p\ge 2$,
  then there exists $c>0$ such that
  \begin{equation}
    \label{eq_donsker_wiener:7} \sup_{m\ge 1} \,    \frac{  \esp{\Bigl|\sum_{k=1}^m X_k \ \sqrt{m}\,\Bigl(r_k^m(t)-r_k^m(s)\Bigr)\Bigr|
         ^{p}}}{|t-s|^{p/2}}<c\, \esp{|X_1|^p }
  \end{equation}
\end{theorem}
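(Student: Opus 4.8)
The plan is to recognise $\sum_{k=1}^m X_k\,\sqrt m\,(r_k^m(t)-r_k^m(s))$ as the terminal value of a discrete-time martingale and to apply the Burkholder--Davis--Gundy (BDG) inequality, exactly the martingale argument announced in the introduction; the only non-automatic input is a deterministic interpolation bound on the coefficients. Fix $m$ and, by symmetry, assume $s\le t$. Set $a_k=a_k(s,t)=\sqrt m\,(r_k^m(t)-r_k^m(s))\ge 0$ for $1\le k\le m$; these are deterministic. Since $r_k^m(t)-r_k^m(s)$ is the Lebesgue measure of $((k-1)/m,k/m]\cap(s,t]$, one has $0\le a_k\le 1/\sqrt m$, and summing the indicators gives $\sum_{k=1}^m (r_k^m(t)-r_k^m(s))=t-s$, hence $\sum_{k=1}^m a_k=\sqrt m\,(t-s)$. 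Combining these two facts,
\[
  \sum_{k=1}^m a_k^2\ \le\ \Bigl(\max_{1\le k\le m}a_k\Bigr)\sum_{k=1}^m a_k\ \le\ \frac{1}{\sqrt m}\cdot\sqrt m\,(t-s)\ =\ |t-s|.
\]
Interpolating the $\ell^2$ norm of $(a_k)_k$ between its $\ell^\infty$ and $\ell^1$ norms in this way is what produces the exponent $p/2$, rather than $p$, in $|t-s|$, and it is the only place where the regular mesh of the partition is used.

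Next, let $\mathcal F_n=\sigma(X_1,\dots,X_n)$ and $N_n=\sum_{k=1}^n a_k X_k$ for $0\le n\le m$. Since the $X_k$ are i.i.d. and centred and the $a_k$ are deterministic, $(N_n,\mathcal F_n)$ is a martingale with increments $\Delta N_k=a_kX_k$ and quadratic variation $[N]_m=\sum_{k=1}^m a_k^2 X_k^2$. The BDG inequality for discrete-time martingales provides a constant $C_p$ depending only on $p$ (in particular not on $m$) with
\[
  \esp{\Bigl|\sum_{k=1}^m a_k X_k\Bigr|^p}=\esp{|N_m|^p}\ \le\ C_p\,\esp{[N]_m^{p/2}}.
\]
As $p/2\ge 1$, Minkowski's inequality in $L^{p/2}$ and the identical distribution of the $X_k$ give
\[
  \Bigl\|\sum_{k=1}^m a_k^2 X_k^2\Bigr\|_{L^{p/2}}\ \le\ \sum_{k=1}^m a_k^2\,\bigl\|X_k^2\bigr\|_{L^{p/2}}=\Bigl(\sum_{k=1}^m a_k^2\Bigr)\,\esp{|X_1|^p}^{2/p},
\]
so that $\esp{[N]_m^{p/2}}\le\bigl(\sum_k a_k^2\bigr)^{p/2}\esp{|X_1|^p}\le |t-s|^{p/2}\,\esp{|X_1|^p}$ by the estimate of the previous paragraph. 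Assembling the pieces,
\[
  \esp{\Bigl|\sum_{k=1}^m X_k\,\sqrt m\,\bigl(r_k^m(t)-r_k^m(s)\bigr)\Bigr|^p}\ \le\ C_p\,|t-s|^{p/2}\,\esp{|X_1|^p},
\]
uniformly in $m$, $s$ and $t$, which is \eqref{eq_donsker_wiener:7} with $c=C_p$.

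I do not expect a real obstacle here; the whole point is to resist the temptation to bound things pathwise. The two items to get right are the deterministic estimate $\sum_k a_k^2\le|t-s|$ and the fact that the BDG constant is dimension-free, i.e. independent of $m$. A crude application of H\"older's inequality to $\sum_k a_k X_k$ would only yield a prefactor $(\sum_k a_k)^{p-1}=m^{(p-1)/2}|t-s|^{p-1}$ in front of $\esp{|X_1|^p}$, which is not bounded in $m$; this is precisely the misleading deterministic approach warned against in the introduction, and replacing H\"older by BDG for the martingale $(N_n)$ is what repairs it.
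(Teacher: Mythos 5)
Your proof is correct and follows essentially the same route as the paper: view $\sum_k X_k\,\sqrt m\,(r_k^m(t)-r_k^m(s))$ as the terminal value of a discrete-time martingale and apply Burkholder--Davis--Gundy, which is exactly the argument in the text. The only difference is cosmetic but welcome: where the paper bounds the quadratic-variation term by a case analysis on $|t-s|\le 1/m$ versus $|t-s|>1/m$ (counting the number of nonvanishing coefficients and using a convexity inequality), you dispose of it in one line via the interpolation $\sum_k a_k^2\le\bigl(\max_k a_k\bigr)\sum_k a_k\le |t-s|$ together with Minkowski's inequality in $L^{p/2}$, which yields the same $|t-s|^{p/2}$ uniformly in $m$ without any case split.
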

\begin{proof}
 % We give the proof for $d=1$. The general case follows by triangular inequality.
  For $0\le s < t\le 1$ fixed, the discrete time process
\begin{equation*}
  Y^{st}\, :\, m\longmapsto Y^{st}_m = \sum_{k=1}^m X_k (r_k^m(t)-r_k^m(s))
\end{equation*}
is a martingale with respect to the filtration $\mathcal
F_n=\sigma(X_k,\, 1\le k \le m)$. The Burkholder-Davis-Gundy \cite{rogers_diffusions_2000}
 entails that
\begin{multline*}
m^p   \esp{\left|\sum_{k=1}^m X_k \,\Bigl(r_k^m(t)-r_k^m(s)\Bigr)\right| ^{p}}\\ \le c\, m^{p/2}
  \esp{\left|\sum_{k=1}^m X_k^2 \,\Bigl(r_k^m(t)-r_k^m(s)\Bigr)^2\right| ^{p/2}}.
\end{multline*}
If $|t-s|\le 1/m$, there is at most two values of $k$ such that
$r_k^m(t)-r_k^m(s)$ is not zero. Furthermore,
\begin{equation*}
  |r_k^m(t)-r_k^m(s)| \le |t-s|
\end{equation*}
hence
\begin{equation*}
  |r_k^m(t)-r_k^m(s)|^2\le m^{-1}|t-s|.
\end{equation*}
In this situation,
\begin{equation*}
  \esp{\left|\sum_{k=1}^m X_k^2 \,\Bigl(r_k^m(t)-r_k^m(s)\Bigr)^2\right| ^{p/2}}\le
  c\,m^{-p/2}\, \esp{|X_1|^{p}}\ |t-s|^{p/2},
\end{equation*}
so that \eqref{eq_donsker_wiener:7} holds true for $|t-s|\le 1/m$.
For $|t-s|>1/m$, we remark that $r_k^m(t)-r_k^m(s)$ is not null for at most $[m(t-s)]+2$ values of $k$
and since $r_k^m$ is Lipschitz continuous, $|r_k^m(t)-r_k^m(s)|\le 1/m$ for such
value of $k$. Hence, by convexity inequality,
\begin{multline*}
  \esp{\left|\sum_{k=1}^m X_k^2 \,\Bigl(r_k^m(t)-r_k^m(s)\Bigl)^2\right| ^{p/2}}\le
  c\, (m|t-s|+2)^{p/2}\,m^{-p}\,\esp{|X_1|^{p}}\\
\le c m^{-p/2}\ \esp{|X_1|^{p}} \ |t-s|^{p/2}.
\end{multline*}
Hence, \eqref{eq_donsker_wiener:7} is true for $|t-s|\ge 1/m.$
\end{proof}
It is then straightforward that we have:
\begin{corollary}
  \label{cor:lamperti}
   Assume that for any $k\ge 1$, $X_k$ belongs to $L^{p}$ for some $p\ge 2$.
   Then, for any $\eta<1/2$,
   \begin{equation*}
     \sup_{m\ge 1}\esp{\|X^m\|_{\fs_{\eta,p}}^p}<\infty.
   \end{equation*}
 \end{corollary}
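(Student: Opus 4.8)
The plan is to compute the expectation of the $\fs_{\eta,p}$-norm directly from its definition, to exchange expectation and the (deterministic) space integrals by Tonelli's theorem, and then to bound the resulting integrand pointwise by means of Theorem~\ref{thm:majo_var}. First, observe that for each realization $X^m$ is a continuous piecewise-affine function, hence lies in $\fs_{\eta,p}$, so $\|X^m\|_{\fs_{\eta,p}}$ is a well-defined nonnegative random variable. By definition,
\begin{equation*}
  \|X^m\|_{\fs_{\eta,p}}^p = \int_0^1 |X^m(t)|^p \dif t + \iint_{[0,1]^2} \frac{|X^m(t)-X^m(s)|^p}{|t-s|^{1+p\eta}}\dif t \dif s ,
\end{equation*}
and since the integrand is nonnegative, Tonelli's theorem gives
\begin{equation*}
  \esp{\|X^m\|_{\fs_{\eta,p}}^p} = \int_0^1 \esp{|X^m(t)|^p}\dif t + \iint_{[0,1]^2} \frac{\esp{|X^m(t)-X^m(s)|^p}}{|t-s|^{1+p\eta}}\dif t \dif s .
\end{equation*}

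Next, write $X^m(t)-X^m(s) = \sum_{k=1}^m X_k\,\sqrt m\,(r_k^m(t)-r_k^m(s))$ and apply Theorem~\ref{thm:majo_var}: there is $c>0$, independent of $m$ and of $s,t$, with
\begin{equation*}
  \esp{|X^m(t)-X^m(s)|^p} \le c\,\esp{|X_1|^p}\,|t-s|^{p/2}.
\end{equation*}
Taking $s=0$ and using $X^m(0)=0$ yields $\esp{|X^m(t)|^p}\le c\,\esp{|X_1|^p}$, so the first term above is bounded by $c\,\esp{|X_1|^p}$, uniformly in $m$. For the second term,
\begin{equation*}
  \iint_{[0,1]^2} \frac{\esp{|X^m(t)-X^m(s)|^p}}{|t-s|^{1+p\eta}}\dif t \dif s \le c\,\esp{|X_1|^p}\iint_{[0,1]^2} |t-s|^{p/2-1-p\eta}\dif t \dif s .
\end{equation*}

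Finally, the remaining deterministic double integral is finite precisely when $p/2-1-p\eta > -1$, i.e.\ when $\eta < 1/2$, which is exactly the hypothesis. Summing the two bounds gives a constant depending only on $p$, $\eta$, and $\esp{|X_1|^p}$ but not on $m$, which proves the claim. There is no serious obstacle here: the only point requiring a little care is to first ensure $X^m\in\fs_{\eta,p}$ before manipulating its norm (and to justify the use of Tonelli), after which the uniform-in-$m$ estimate of Theorem~\ref{thm:majo_var} does all the work, the restriction $\eta<1/2$ being forced exactly by the integrability of $|t-s|^{p/2-1-p\eta}$ near the diagonal $\{s=t\}$.
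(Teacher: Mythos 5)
Your proof is correct and follows the same route as the paper: the one-line argument given there is precisely that Theorem~\ref{thm:majo_var} reduces everything to the $\mu_{\eta,p}$-integrability of $(s,t)\mapsto|t-s|^{p/2}$, which holds exactly when $p(1/2-\eta)>0$. You have simply written out in full the Tonelli step and the bound on the $L^p([0,1];\dif t)$ part of the norm that the paper leaves implicit.
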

 \begin{proof}
   Actually, $(s,t)\mapsto |t-s|^{p/2}$ is $\mu_{\eta,p}$-integrable provided
   that $p(1/2-\eta)>0,$ i.e. $\eta<1/2.$
 \end{proof}
 \begin{corollary}[Lamperti]
   Assume that for any $k\ge 1$, $X_k$ belongs to $L^{p}$ for some $p\ge 2$.
Then, for any $1/p<\eta<1/2$,  the sequence
$(X^m,\,  m\ge 1)$ converges in distribution in
$\Hol(\eta-1/p)$ to $B$.
\end{corollary}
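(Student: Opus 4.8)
The plan is to combine the uniform moment bound from Corollary~\ref{cor:lamperti} with the classical Prokhorov argument, using the compactness of the Sobolev embedding to upgrade tightness to convergence in the H\"older topology. First I would fix exponents $\gamma,q$ with $1/p<\eta$ and choose an intermediate pair, say $(\gamma,p)$ with $\eta-1/p<\gamma-1/p$ false---rather, I would exploit the chain of embeddings recalled in Section~\ref{sec:fract-sobol-spac}: pick $\gamma$ with $1/p<\gamma<\eta<1/2$, so that $\fs_{\eta,p}\hookrightarrow\fs_{\gamma,p}$ compactly (this is the compact embedding statement quoted from \cite{Feyel1998,adams_sobolev_2003}), and $\fs_{\gamma,p}\hookrightarrow\Hol(\gamma-1/p)$ continuously by the Sobolev embedding. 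Since by Corollary~\ref{cor:lamperti} the family $\{X^m\}$ is bounded in $L^p$-norm in $\fs_{\eta,p}$, Markov's inequality shows that for every $\varepsilon>0$ there is a ball $B_R$ of $\fs_{\eta,p}$ with $\P(X^m\notin B_R)<\varepsilon$ uniformly in $m$; as $B_R$ is relatively compact in $\fs_{\gamma,p}$, the laws of $(X^m)$ are tight on $\fs_{\gamma,p}$, hence on $\Hol(\eta-1/p)$ after noting $\eta-1/p\le\gamma-1/p$ is the wrong direction---so I instead record tightness directly in $\Hol(\eta'-1/p)$ for any $\eta'<\eta$ via the same compact embedding $\fs_{\eta,p}\hookrightarrow\Hol(\eta'-1/p)$, which is compact when $\eta'<\eta$. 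Choosing $\eta'\in(\eta\vee(1/p),\eta)$ appropriately covers every exponent below $1/2$.

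Next I would identify the limit. Tightness gives subsequential weak limits; to pin them down it suffices to check convergence of finite-dimensional distributions. For fixed times $0\le t_1<\dots<t_r\le 1$ the vector $(X^m(t_1),\dots,X^m(t_r))$ is, by construction, a normalized sum of i.i.d.\ increments, and the ordinary multivariate central limit theorem gives convergence to the Gaussian vector $(B(t_1),\dots,B(t_r))$ with the Brownian covariance $t_i\wedge t_j$. Since evaluation at a point is continuous on $\Hol(\eta-1/p)$ (indeed on $\mathcal C$), the finite-dimensional marginals of any subsequential limit coincide with those of $B$; as Borel measures on the Polish space $\Hol(\eta-1/p)$ are determined by their finite-dimensional projections onto $\mathcal C$ (which separate points and generate the Borel $\sigma$-field), every subsequential limit equals the law of $B$. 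A standard subsequence argument then yields $X^m\Rightarrow B$ in $\Hol(\eta-1/p)$.

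The main obstacle is the bookkeeping of exponents: the compact embedding $\fs_{\eta,p}\hookrightarrow\Hol(\beta)$ holds for $\beta<\eta-1/p$ but not for $\beta=\eta-1/p$, so a direct appeal to Corollary~\ref{cor:lamperti} at the critical exponent does not immediately give tightness in $\Hol(\eta-1/p)$. The fix is the two-parameter trick: given target exponents $1/p<\eta<1/2$, apply Corollary~\ref{cor:lamperti} with a slightly larger regularity $\eta''\in(\eta,1/2)$ (still admissible since $X_k\in L^p$ and we only need $\eta''<1/2$), obtaining a uniform $L^p$-bound of $\|X^m\|_{\fs_{\eta'',p}}$; then $\fs_{\eta'',p}\hookrightarrow\Hol(\eta-1/p)$ is compact because $\eta-1/p<\eta''-1/p$, and tightness in $\Hol(\eta-1/p)$ follows. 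Everything else---Markov's inequality, Prokhorov's theorem, the CLT for finite-dimensional marginals, and the continuity of coordinate evaluations---is routine, so I would keep those steps brief and only spell out the choice of $\eta''$ and the resulting compact embedding in detail.
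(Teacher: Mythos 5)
Your overall strategy is the same as the paper's: apply the uniform moment bound of Corollary~\ref{cor:lamperti} at a slightly larger regularity index $\zeta\in(\eta,1/2)$, use Markov's inequality to confine the laws to a ball of $\fs_{\zeta,p}$ with high probability, invoke the compact Sobolev embedding to get tightness, and identify the limit through convergence of finite-dimensional distributions. The bookkeeping of exponents that you agonize over is resolved exactly as you finally propose, and that part is fine.

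There is, however, one step in your final paragraph that would not survive scrutiny: you carry out the Prokhorov argument and the identification of subsequential limits directly in $\Hol(\eta-1/p)$, which you call a Polish space. It is not: the paper itself stresses (Section~\ref{sec:rough-paths} and Section~\ref{sec:kolm-rubinst-dist}) that H\"older spaces are complete but \emph{not separable}, which is precisely why it works in the fractional Sobolev scale. In a non-separable Banach space the Borel $\sigma$-field need not be generated by the coordinate evaluations, so the claim that ``Borel measures on $\Hol(\eta-1/p)$ are determined by their finite-dimensional projections'' is not justified as stated, and Prokhorov's theorem in the form you use it also requires separability. The paper's proof avoids this entirely: tightness and weak convergence are established in the separable space $\fs_{\eta,p}$ (using the compact embedding $\fs_{\zeta,p}\subset\fs_{\eta,p}$ for $\eta<\zeta<1/2$), where finite-dimensional distributions do characterize the limit, and only at the very end is the conclusion transported to $\Hol(\eta-1/p)$ by the \emph{continuous} embedding $\fs_{\eta,p}\subset\Hol(\eta-1/p)$, under which convergence in distribution is preserved. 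You should restructure your last step accordingly; with that change the argument coincides with the paper's.
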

\begin{proof}
  It is well-known that the finite dimensional distributions of $X^m$
  converge to that of $B$. From Corollary~\ref{cor:lamperti}, we know that for
  any $0<\zeta<1/2$, for any $\epsilon>0$, there
  exists $K_{\epsilon}$ such that
  \begin{equation*}
    \sup_{m\ge 1}\P(\|X^m\|_{\fs_{\zeta,\,p}}\ge K_{\epsilon}) \le \epsilon.
  \end{equation*}
For $0<\eta<\zeta<1/2$, the embedding of $\fs_{\zeta,p}$ into $\fs_{\eta,\,p}$ is
compact: The $\fs_{\zeta,\,p}$-ball of radius $K_{\epsilon}$ is compact in $\fs_{\eta,p}$.
Thus, the sequence $(X^m,\,  m\ge 1)$ is tight in
$\fs_{\eta,p}$ hence convergent. The result follows by the
continuous embedding of $\fs_{\eta,p}$ into $\Hol(\eta-1/p)$.
\end{proof}

\subsection{Abstract Wiener spaces}
\label{sec:gelfand-triplet-1}
The construction of the Gaussian measure on a Banach space is a delicate
question, we refer to \cite{pietsch_operator_1980,MR0388013} for details.
For $H$ a Hilbert space, a cylindrical set is a set of the form
\begin{equation*}
  Z=\{x\in H,\ (\<x,h_1\>_H,\cdots,\<x,h_n\>)\in B\} 
\end{equation*}
for some integer $n$, where $(h_1,\cdots,h_n)$ is an orthonormal family of $H$ and $B$ a
Borelean subset of~$\R^n$. Let $m_n$ be the standard Gaussian
measure on~$\R^n$. By setting, $\mu_H(Z)=m_n(B)$, we get a
cylindrical standard Gaussian measure on $H$. To get a Radon measure on a Banach
space $W$, the usual way is to  find a map from $H$ to $W$ which is
radonifying: It is a linear map which transforms a cylindrical measure into a true
regular Radon measure. We will not dwell into the details of this
theory, it suffices to say that we have the following result : (see
\cite[Proposition XV,4,1]{MR0388013} or \cite[25.6.3]{pietsch_operator_1980}), %As a consequence of the previous lemmae, we get 
\begin{lemma}
  \label{lem_donsker_final:radon}
  Let
  \begin{equation*}
    \Lambda=\{(\eta,p)\in \R^{+}\times\R^{+}, 0< \eta-1/p<1/2\}.
  \end{equation*}
For any $(\eta,p)\in \Lambda$, 
the embedding 
\begin{equation*}
   \bl_{1, 2}\xrightarrow{\quad \iota_{\eta,p}\quad}
   \fs_{\eta,\, p}\subset \Hol(\eta-1/p).
\end{equation*}
is a radonifying map.
\end{lemma}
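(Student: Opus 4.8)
The plan is to factor the embedding $\iota_{\eta,p}$ through a Hilbert--Schmidt map, since a classical criterion (the one cited from \cite{MR0388013,pietsch_operator_1980}) guarantees that a linear map out of a Hilbert space is radonifying as soon as it factors as a Hilbert--Schmidt operator followed by a continuous embedding into the target Banach space. Concretely, I would first fix $(\eta,p)\in\Lambda$, so that $0<\eta-1/p<1/2$, and choose an auxiliary exponent: pick $\eta'$ with $\eta<\eta'<1/2+1/p$ (equivalently $\eta'-1/2<1/2$, and with room so that $\eta'-1/p$ still exceeds $\eta-1/p$ if needed), and interpose a Besov--Liouville space $\bl^+_{\eta',2}$. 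Using the continuous (indeed compact) embeddings recorded in the excerpt, namely $\fs_{a,p}\subset \bl^+_{b,p}\subset\fs_{c,p}$ from \cite{Feyel1998} and the Sobolev-type inclusions $\fs_{\eta,p}\subset\fs_{\gamma,q}$ and $\fs_{\eta,p}\subset\Hol(\eta-1/p)$, the tail of the factorization $\bl^+_{\eta',2}\hookrightarrow \fs_{\eta,p}\subset\Hol(\eta-1/p)$ is a bona fide continuous embedding. So everything reduces to showing that the head $\bl_{1,2}\hookrightarrow \bl^+_{\eta',2}$ (or, adjusting indices, $\bl_{1,2}\hookrightarrow\bl^+_{\eta',2}$ with $\eta'<1/2$ relative to the order $1$) is Hilbert--Schmidt.

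The second step is the Hilbert--Schmidt computation. The space $\bl_{1,2}=I^1_{0^+}(L^2)$ is a Hilbert space isometric to $L^2[0,1]$ via $f\mapsto I^1_{0^+}f$, and similarly $\bl^+_{\eta',2}=I^{\eta'}_{0^+}(L^2)$ is isometric to $L^2[0,1]$ via $f\mapsto I^{\eta'}_{0^+}f$. Under these identifications the embedding $\bl_{1,2}\hookrightarrow \bl^+_{\eta',2}$ is conjugate to the operator on $L^2[0,1]$ given by $f\mapsto I^{-\eta'}_{1^-}I^1_{0^+}f$ — morally a fractional integration operator of order $1-\eta'>1/2$. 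I would then either diagonalize (the operator $I^1_{0^+}$ composed with the appropriate fractional derivative has an explicit singular-value decay) or, more robustly, estimate the Hilbert--Schmidt norm directly by computing $\iint_{[0,1]^2}|K(s,t)|^2\,\dif s\,\dif t$ for the kernel $K$ of the conjugated operator; the kernel behaves like $|s-t|^{-\eta'}$ up to smoother terms, so $|K|^2\sim |s-t|^{-2\eta'}$ is integrable on $[0,1]^2$ precisely because $2\eta'<1$, which is exactly the condition $\eta'<1/2$ we arranged. The analogous bound $\|h_{s_1,s_2}\|_{\fs_{\eta,p}}\le c|s_2-s_1|^{1-\eta}$ from Lemma~\ref{lem:normeHDansDual} is the same phenomenon in a dual guise and can be used as a sanity check or, with a little care, as the main quantitative input.

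Assembling: $\iota_{\eta,p}$ is the composition of a Hilbert--Schmidt operator $\bl_{1,2}\to\bl^+_{\eta',2}$ with the continuous embedding $\bl^+_{\eta',2}\to\fs_{\eta,p}\subset\Hol(\eta-1/p)$, hence radonifying by the cited proposition. The main obstacle is the middle step — getting the Hilbert--Schmidt property with the right index bookkeeping: one must choose $\eta'$ simultaneously large enough that $\bl^+_{\eta',2}$ still embeds continuously into $\fs_{\eta,p}$ (this needs $\eta'-1/2\ge\eta-1/p$, feasible since $\eta-1/p<1/2$) and small enough (namely $\eta'<1/2$) that the fractional-integration kernel is square-integrable. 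The nonempty overlap of these two constraints is exactly what the hypothesis $(\eta,p)\in\Lambda$ provides, so the interval of admissible $\eta'$ is nonempty; pinning down that this interval is nonempty and that the kernel estimate really gives finite Hilbert--Schmidt norm is where the actual work lies, the rest being formal.
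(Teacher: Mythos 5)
The paper itself offers no proof of this lemma: it is quoted directly from \cite{MR0388013} (Prop.\ XV.4.1) and \cite{pietsch_operator_1980} (25.6.3), so there is no internal argument to compare with. Your proposed factorization, however, contains a genuine gap, located exactly where you say ``the actual work lies''. You need an intermediate Hilbert space $\bl^{+}_{\eta',2}$ such that (i) $\bl_{1,2}\hookrightarrow\bl^{+}_{\eta',2}$ is Hilbert--Schmidt and (ii) $\bl^{+}_{\eta',2}\hookrightarrow\fs_{\eta,p}$ is a continuous inclusion. As you correctly compute, (i) forces $\eta'<1/2$: the conjugated operator is $I^{1-\eta'}_{0^{+}}$, whose kernel $(x-t)^{-\eta'}\car_{t<x}$ is square integrable iff $2\eta'<1$. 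But (ii) forces $\eta'>\eta-1/p+1/2$: the only route through the stated embeddings is $\bl^{+}_{\eta',2}\subset\fs_{c,2}$ for $c<\eta'$ followed by $\fs_{c,2}\subset\fs_{\eta,p}$, which requires $c-1/2\ge\eta-1/p$; more fundamentally, for $\eta'\le 1/2$ the space $\bl^{+}_{\eta',2}=I^{\eta'}_{0^{+}}(L^{2})$ contains discontinuous functions, while $\fs_{\eta,p}\subset\Hol(\eta-1/p)$ consists of H\"older-continuous ones, so the inclusion (ii) is not even defined. Since $(\eta,p)\in\Lambda$ gives $\eta-1/p>0$, the two constraints $\eta'<1/2$ and $\eta'>\eta-1/p+1/2>1/2$ are incompatible: the ``nonempty overlap'' you claim the hypothesis provides is in fact empty.

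This is not a fixable bookkeeping issue but a structural one: no Hilbert space in the $L^{2}$ Besov--Liouville scale both receives $\bl_{1,2}$ via a Hilbert--Schmidt map and embeds into a H\"older space of positive exponent (Brownian motion does not belong to $\fs_{s,2}$ for any $s>1/2$, so such an intermediate space could not carry the Wiener measure). That is precisely why the criterion invoked by the cited references is not the Hilbert--Schmidt one but the theory of $p$-summing ($p$-radonifying) operators for $p\neq 2$: the embedding of $\bl_{1,2}$ into $\Hol(\alpha)$, $0<\alpha<1/2$, is $p$-summing for suitable large $p$, and $p$-summing maps radonify the canonical Gaussian cylindrical measure. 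Your Hilbert--Schmidt factorization does prove the analogue of the lemma for targets $\fs_{c,p}$ with $c-1/p<0$ (the Besov--Liouville setting of \cite{coutin_steins_2013}), but it cannot reach the H\"older-embedded targets required here.
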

% \begin{proof}
% It is well known (see \cite{MR0388013}) that, for any $\eta'<1/2$, the embedding 
% \begin{equation*}
%    \fs_{1, 2}\xrightarrow{\quad\quad}  \fs_{\eta', 2}
% \end{equation*}
% is Hilbert-Schmidt hence radonifying. For $p>2$, as mentioned in section~\ref{sec:fract-sobol-spac}, the space
% $\fs_{\eta', 2}$ is continuously embedded in $ \fs_{\eta,p}$ for any $\eta<\eta'$.
% Since the composition of a radonifying map with a
%    continuous map is always radonifying, the result follows.
% \end{proof}
\begin{definition}
  An abstract Wiener space is a triple $(\iota,H,W)$ where $H$ is a
  separable Hilbert space, $W$ a Banach space and $\iota$ the embedding
  from $H$ into $B$ which has to be to be radonifying.
\end{definition}
We have the following
diagram:
\begin{equation}\label{eq_donsker:14}
  W^* \xrightarrow{\quad \iota^*\quad } H^* \xrightarrow{\quad j_{H^{*},H}\quad } H \xrightarrow{\quad \iota
    \quad} W,
\end{equation}
where $j_{H^{*},H}$ is the bijective isometry between the Hilbert space $H^{*}$
and its dual $H$.
% \begin{remark}
% Note that  $\iota^*$ is not the identity since we have identified $H^*$
% with~$H$.  For instance, if $\eta=\varepsilon_{t}$, the Dirac mass at point $t$,
% which belongs to $\fs_{\eta,p}^{*}$ since $\fs_{\eta,p}\subset \Hol(\eta-1/p)$.
% Then, $\iota_{\eta,p}(\varepsilon_{t})\in \fs_{1,2}$ must satisfy for any $h\in
% \f_{1,2}\subset \fs_{\eta,p}$:
% \begin{equation*}
%  h(t)= \<\varepsilon_{t},\, h\>_{\fs_{\eta,p}^{*},\fs_{\eta,p}}=\int_{0}^{1}\dot{\overline{\iota_{\eta,p}(\varepsilon_{t})}}(s)\, \dot{h}(s)\dif s,
% \end{equation*}
% where $\dot{h}=(I^{1}_{0^{+}})^{-1}(h)$. This means that
% \begin{equation}
%   \label{eq_donsker:12}
%   \dot{\overline{\iota_{\eta,p}(\varepsilon_{t})}}(s)=\car_{[0,t]}(s) \text{ hence }\iota_{\eta,p}(\varepsilon_{t})(s)=t\wedge s.
% \end{equation}  
% \end{remark}
As a direct consequence of Lemma~[\ref{lem_donsker_final:radon}], we have 
\begin{theorem}
  \label{thm_donsker_final:1}
The triple $(\iota_{\eta,p},\ \bl_{1,2},\
\fs_{\eta,\,p})$ is an abstract Wiener space, for any $(\eta,p)\in
\Lambda$.
\end{theorem}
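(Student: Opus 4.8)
The plan is to verify, one by one, the three requirements in the definition of an abstract Wiener space for the triple $(\iota_{\eta,p},\ \bl_{1,2},\ \fs_{\eta,\,p})$: that $\bl_{1,2}$ is a separable Hilbert space, that $\fs_{\eta,\,p}$ is a Banach space, and that $\iota_{\eta,p}$ is a radonifying embedding of the former into the latter.

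First I would treat $\bl_{1,2}=I^1_{0^+}(L^2)$. By the normalisation \eqref{normedansIap}, the operator $I^1_{0^+}\colon L^2([0,1])\to\bl_{1,2}$ is a surjective isometry; pulling back the $L^2$ scalar product along it, $\langle I^1_{0^+}f,\,I^1_{0^+}g\rangle_{\bl_{1,2}}:=\int_0^1 f(s)g(s)\,\dif s$, equips $\bl_{1,2}$ with a Hilbert structure making it isometrically isomorphic to $L^2([0,1])$, hence separable. (Concretely this is the classical Cameron--Martin space of Brownian motion, the absolutely continuous paths $h$ on $[0,1]$ with $h(0)=0$ and $h'\in L^2$.) The second requirement is immediate from Section~\ref{sec:fract-sobol-spac}, where the $\fs_{\eta,\,p}$ are recalled to be Banach spaces.

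For the third requirement, note that the condition $(\eta,p)\in\Lambda$ is precisely $0<\eta-1/p<1/2$, which is the hypothesis of Lemma~\ref{lem_donsker_final:radon}; that lemma states exactly that $\iota_{\eta,p}\colon\bl_{1,2}\to\fs_{\eta,\,p}$ is radonifying. A radonifying map is in particular a bounded linear operator, and $\iota_{\eta,p}$ acts as the inclusion of functions and is therefore injective, so it is an embedding in the required sense. Matching these three facts against the definition then finishes the proof, and plugging the triple into the Gelfand chain \eqref{eq_donsker:14} produces the associated abstract Wiener structure. I do not expect any genuine obstacle: the one substantial input, the radonifying property, has already been isolated as Lemma~\ref{lem_donsker_final:radon}, so the remaining work is only the routine check that $\bl_{1,2}$ is a separable Hilbert space under \eqref{normedansIap} and that $\iota_{\eta,p}$ is injective, after which the statement is pure bookkeeping.
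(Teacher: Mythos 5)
Your proposal is correct and follows the paper's own route: the paper derives the theorem as a direct consequence of Lemma~\ref{lem_donsker_final:radon}, exactly the one substantial input you identify, with the remaining verifications (that $\bl_{1,2}$ is a separable Hilbert space isometric to $L^2$ via \eqref{normedansIap}, and that $\fs_{\eta,p}$ is Banach) being the same routine bookkeeping you describe. Nothing is missing.
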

The Wiener measure $\P_{\eta,p}$ on $\fs_{\eta,p}$, 
is defined by its characteristic function: For all $\eta \in \fs_{\eta,p}^*$,
\begin{equation*}
  \int_{\fs_{\eta,p}} e^{i\, \<\eta,\, y\>_{\fs_{\eta,p}^*,\fs_{\eta,p}} }\dif \P_{\eta,p}(y)=\exp(-\frac12\| j_{H^{*},H}\circ\iota_{\eta,p}^* (\eta)\|_{\bl_{1,2}}^2).
\end{equation*}
This means that for any $\eta\in \fs_{\eta,p}^*$, the random variable $
\<\eta,\, y\>_{\fs_{\eta,p}^*,\fs_{\eta,p}}$ is a centered Gaussian random
variable with variance given by
\begin{equation*}
  \int_{\fs_{\eta,p}} \<\eta,\, w\>_{\fs_{\eta,p}^*,\fs_{\eta,p}}^2 \dif \P_{\eta,p}(y)=\| j_{H^{*},H}\circ \iota_{\eta,p}^*(\eta)\|_{\bl_{1,2}}^2.
\end{equation*}
% \begin{remark}\label{rem:restrictionWienerMeasure}
%   If $\fs_{\eta,p}\subset \fs_{\zeta,r}$, the pivotal space $\bl_{1,2}$ is the
%   same hence for any $\eta\in\fs_{\zeta,r}^{*} $,
%   \begin{equation*}
%     j_{H^{*},H}\circ \iota_{\zeta,r}^{*}(\eta)= j_{H^{*},H}\circ \iota_{\eta,p}^{*}\circ \operatorname{emb}_{\eta,p}^{\zeta,r}(\eta),
%   \end{equation*}
%   where $\operatorname{emb}_{\eta,p}^{\zeta,r}$ is the embedding from
%   $\fs_{\zeta,r}^{*}$ into $\fs_{\eta,p}^{*}$.
% Thus, the definition
%    of  Wiener measures ensure that $\P_{\zeta,r}$ restricted to
%   $\fs_{\eta,p}$ coincides with $\P_{\eta,p}$.
% \end{remark}
\begin{remark}
  In what follows, as it is customary, we  identify $\bl_{1,2}$ and its dual
  so that the diagram \eqref{eq_donsker:14} becomes
  \begin{equation*}
  \fs_{\eta,p}^* \xrightarrow{\quad \j_{\eta,p}^{*}=j_{H^{*},H}\circ \,\iota^*_{\eta,p}\quad } \bl_{1,2}  \xrightarrow{\quad \iota_{\eta,p}
    \quad} \fs_{\eta,p}.
\end{equation*}
\end{remark}

By construction, $\j_{\eta,p}^*( \fs_{\eta,p}^*)$ is dense in $\bl_{1,2}$ so that we can define
the Wiener integral as follows.
\begin{definition}[Wiener integral]
  The Wiener integral, denoted as $\delta_{\eta,p}$, is the isometric extension
  of the map
  \begin{align*}
   \delta_{\eta,p}\, :\,  \j_{\eta,p}^*(\fs_{\eta,p}^*)\subset\bl_{1,2}&\longrightarrow  L^2(\P_{\eta,p})\\
   \j_{\eta,p}^*(\eta)  &\longmapsto \<\eta,\, y\>_{\fs_{\eta,p}^*,\fs_{\eta,p}}. 
  \end{align*}
\end{definition}
This means that if $h=\lim_{n\to \infty } \j_{\eta,p}^{*}(\eta_{n})$ in
$\bl_{1,2}$,
\begin{equation*}
  \delta_{\eta,p} h(y)=\lim_{n\to \infty} \<\eta_{n},\, y\>_{\fs_{\eta,p}^*,\fs_{\eta,p}} \text{ in }L^{2}(\P_{\eta,p}).
\end{equation*}
\begin{remark}
  As the Dirac measure at point $t\in [0,1]$ belongs to any $\fs_{\eta,p}^{*}$,
  we an search for $\j_{\eta,p}^{*}(\varepsilon_{t})$. For any $h\in
  \bl_{1,2}\subset \fs_{\eta,p}$,
  we must have
  \begin{equation*}
    h(t)=\<\varepsilon_{t},\, h\>_{\fs_{\eta,p}^{*},\fs_{\eta,p}}=\int_{0}^{1}\dot{\overline{\j_{\eta,p}^{*}(\varepsilon_{t})}}(s)\dot{h}(s)\dif s
  \end{equation*}
  hence
  \begin{equation*}
    \dot{\overline{\j_{\eta,p}^{*}(\varepsilon_{t})}}(s)=\car_{[0,t]}(s) \text{ and } \j_{\eta,p}^{*}(\varepsilon_{t})(s)=t\wedge s.
\end{equation*}
This means, that whatever the functional space $\fs_{\eta,p}$ we are
considering,
\begin{equation*}
B_{\eta,p}= (\delta_{\eta,p}(t\wedge .), \, t\in [0,1])
\end{equation*}
is a centered Gaussian process of covariance kernel
\begin{equation*}
  \esp{B_{\eta,p}(t)B_{\eta,p}(s)}=\<t\wedge., \, s\wedge .\>_{\bl_{1,2}}=t\wedge s.
\end{equation*}
Hence, $B_{\eta,p}$ is a standard Brownian motion. Since we work with a sequence
of increasing (in the sense of inclusion) spaces, we remove the subscripts when
no risk of confusion may happen.
\end{remark}
% Remark that for $\eta\in \fs_{\eta,p}^*$, for any $y\in \fs_{\eta,p}$,
% \begin{equation}\label{eq:3}
%   \Bigl|\delta\bigl(\j_{\eta,p}^*(\eta)\bigr)(y)\Bigr|\le \|\eta\|_{\fs_{\eta,p}^*}\ \|y\|_{\fs_{\eta,p}}.
% \end{equation}

% On an abstract Wiener space, the Gaussian measure on the Banach space
% is defined as the image measure of the cylindrical standard
% Gaussian measure on $H$ by the map $i$. In what follows, we set
% $\ws_{\epsilon,p}=\fs_{1/2-\epsilon,\,p}$,
% $\cm_{\epsilon,p}=\bl_{1,2}$ and $\mu_{\epsilon,p}$ the
% Wiener measure on $\ws_{\epsilon,p}$.
\begin{definition}
  \label{def_donsker_final:1}
A function $F$ from $\ws_{\eta,p}$ into $\R$ is Lipschitz
whenever for any $x$ and $y$ in $\ws_{\eta,p}$,
\begin{equation*}
  |F(x)-F(y)|\le \Vert x-y\Vert_{\ws_{\eta,p}}.
\end{equation*}
The set of such functions is denoted by $\Lip(d_{\fs_{\eta,p}})$.
\end{definition}
% Following \cite{decreusefond_stein-dirichlet-malliavin_2015,shih_steins_2011}, we introduce the
% Stein-Dirichlet-Malliavin structure on~$\ws_{\eta,p}$. 
\begin{definition}[Ornstein-Uhlenbeck semi-group]
  For any bounded function on $\ws_{\eta,p}$, for any $\tau \ge 0$,
  \begin{equation*}
    P_\tau F(x)=\int_{\ws_{\eta,p}} F(e^{-\tau}x+\beta_\tau y)\dif\P_{\eta,p}(y)
  \end{equation*}
where $\beta_\tau=\sqrt{1-e^{-2\tau}}$.
\end{definition}
The dominated convergence theorem entails that $P_\tau$ is ergodic:
For any $x\in \ws_{\eta,p}$,
\begin{equation*}
  P_\tau F(x)\xrightarrow{\tau\to\infty} \int_{\ws_{\eta,p}} F\dif\P_{\eta,p}.
\end{equation*}
Moreover, the invariance by rotation of Gaussian measures implies that
\begin{equation*}
  \int_{\ws_{\eta,p}} P_\tau F(x)\dif\P_{\eta,p}(x)=
  \int_{\ws_{\eta,p}}F\dif\P_{\eta,p}\text{, for any $\tau \ge 0$.}
\end{equation*}
Otherwise stated, the Gaussian measure on $\fs_{\eta,p}$ is the invariant and stationary
measure of the semi-group $P=(P_\tau, \, \tau \ge 0)$. For details on the
Malliavin gradient, we refer to \cite{Nualart1995b,Ustunel2010}.
\begin{definition}
  \label{def_donsker_final:2} Let $X$ be a Banach space. A function $F\, :\, \ws_{\eta,p}\to
  X$ is said to be cylindrical if it is of the form
  \begin{equation*}
    F(y)=\sum_{j=1}^k f_j(\delta h_1(y),\cdots,\delta h_k(y))\, x_j
  \end{equation*}
where for any $j\in \{1,\cdots,k\}$, $f_j$ belongs to the Schwartz space on $\R^k$, $(h_1,\cdots, h_k)$ are
elements of $\bl_{1,2}$ and $(x_1,\cdots,x_j)$ belong to $X$. The set of such
functions is denoted by $\cyl(X)$.

For $h\in \bl_{1,2}$,
\begin{equation*}
  \< \nabla F, \, h\>_{ \bl_{1,2}}=\sum_{j=1}^k\sum_{l=1}^{k} \partial_lf(\delta h_1(y),\cdots,\delta h_k(y))\, \<h_l,\, h\>_{ \bl_{1,2}}\ x_j,
\end{equation*}
which is equivalent to say
\begin{equation*}
  \nabla F = \sum_{j,l=1}^k \partial_jf(\delta h_1(y),\cdots,\delta h_k(y))\, h_l\otimes\ x_j.
\end{equation*}
The space $\D_{1,2}(X)$ is the closure of the space of cylindrical functions with
respect to the norm
\begin{equation*}
  \|F\|_{1,2}^2=\|F\|_{L^2(\P_{\eta,p};X)}^2+\|\nabla F\|_{L^2(\P_{\eta,p};\bl_{1,2}\otimes X)}^2.
\end{equation*}
By induction, higher order gradients are defined similarly. For any $k\ge 1$,
the norm on the space $\D_{k,2}(X)$ is given by 
\begin{equation*}
   \|F\|_{k,2}^2=\|F\|_{L^2(\P_{\eta,p};X)}^2+\sum_{j=1}^{k}   \|\nabla^{(j)} F\|_{L^2(\P_{\eta,p};\bl_{1,2}^{\otimes j}\otimes X)}^2.
\end{equation*}
\end{definition}
According to \cite{shih_steins_2011}, we have the following properties   of $\P_{t}F$.
\begin{proposition}
  \label{prop:shih}
Let $F\in L^{1}(\fs_{\eta,p},\P_{\eta,p})$ and $x,\, y \in \fs_{\eta,p}$.
For any $t >0$, $P_t F(x)$ belongs to $\D_{k,2}$ for any $k\ge 1$.
Moreover, the operator $\nabla^{(2)}P_t F(x)$
  is trace-class.
%   \begin{equation*}
%     |\trace_{W_{1,2}}(\nabla^{(2)}P_t F(x))|\le
%     \frac{e^{-3t/2}}{\beta_t}\int_{\ws_{\eta,p}}\Vert
%     y\Vert_{\ws_{\eta,p}}\dif\P_{\eta,p}(y).
%   \end{equation*}
% Moreover, 
% \begin{equation*}
%   \Vert \nabla^{(2)}P_t F(x)-\nabla^{(2)}P_t F(y)\Vert_{\text{HS}({W_{1,2}})}\le  \frac{e^{-5t/2}}{\beta_{t/2}^2}\ \Vert x-y\Vert_{\ws_{\eta,p}}.
% \end{equation*}
  Let $L$ be the formal operator
  defined by 
  \begin{equation*}
    LG(x)=-\< x,\nabla G(x)\>_{\ws_{\eta,p},\ws_{\eta,p}^*} +\trace_{W_{1,2}}(\nabla^{(2)}G(x)).
  \end{equation*}
Then, for any $t >0$, $P_t F$ belongs to the domain of $L$ and
\begin{equation*}
  \frac{d}{dt}P_t F(x)=LP_t F(x).
\end{equation*}
\end{proposition}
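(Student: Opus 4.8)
The statement is, for the abstract Wiener space $(\iota_{\eta,p},\bl_{1,2},\fs_{\eta,p})$ of Theorem~\ref{thm_donsker_final:1}, a transcription of results of Shih, and the plan is to reproduce the core of that argument. The only inputs are Mehler's formula — which here is nothing but the definition of $P_\tau$ — and the Cameron--Martin quasi-invariance of $\P_{\eta,p}$ under translations by elements of $\bl_{1,2}$. Fix $\tau>0$, so that $\beta_\tau>0$. For $h\in\bl_{1,2}$ and $s\in\R$, I would write $e^{-\tau}(x+sh)+\beta_\tau y=e^{-\tau}x+\beta_\tau\bigl(y+(se^{-\tau}/\beta_\tau)h\bigr)$ and apply the Cameron--Martin theorem to the $\bl_{1,2}$-translation of $y$, obtaining
\begin{equation*}
  P_\tau F(x+sh)=\int_{\fs_{\eta,p}}F(e^{-\tau}x+\beta_\tau y)\,\exp\!\left(\frac{se^{-\tau}}{\beta_\tau}\delta_{\eta,p}h(y)-\frac{s^2e^{-2\tau}}{2\beta_\tau^2}\|h\|_{\bl_{1,2}}^2\right)\dif\P_{\eta,p}(y).
\end{equation*}

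The exponential weight is entire in $s$, and its $s$-derivatives at $s=0$ are polynomials in $\delta_{\eta,p}h(y)$, hence lie in every $L^q(\P_{\eta,p})$; a dominated-convergence argument (applied after replacing $F$ by the smooth function $P_\epsilon F$ if needed, and noting that in the applications of Section~\ref{sec:impr-stein-meth} one has $F\in\Lip(d_{\fs_{\eta,p}})\subset\bigcap_{q\ge1}L^q(\P_{\eta,p})$ by Gaussian concentration) then licenses differentiation under the integral sign. Iterating, one gets Mehler's formula for the gradients: for $h_1,\dots,h_k\in\bl_{1,2}$,
\begin{equation*}
  \bigl\langle\nabla^{(k)}P_\tau F(x),\,h_1\otimes\cdots\otimes h_k\bigr\rangle=\left(\frac{e^{-\tau}}{\beta_\tau}\right)^{\!k}\int_{\fs_{\eta,p}}F(e^{-\tau}x+\beta_\tau y)\,\Phi_{h_1,\dots,h_k}(y)\,\dif\P_{\eta,p}(y),
\end{equation*}
where $\Phi_{h_1,\dots,h_k}$ is a generalized Hermite polynomial of degree $k$ in the jointly Gaussian family $(\delta_{\eta,p}h_j(y))_{j=1}^{k}$. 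The crucial point is that the prefactor $(e^{-\tau}/\beta_\tau)^{k}$ is finite for every $\tau>0$: this is precisely the regularizing effect of $P_\tau$. Substituting these expressions into the definition of $\|\cdot\|_{k,2}$ and bounding the Hermite polynomials in $L^2(\P_{\eta,p})$ by Gaussian moment estimates, together with Nelson's hypercontractivity to control $\|F(e^{-\tau}\,\cdot+\beta_\tau\,\cdot)\|_{L^2(\P_{\eta,p})}$, gives $P_\tau F\in\D_{k,2}$ for every $k\ge1$.

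For the evolution equation, I would differentiate Mehler's formula in $\tau$: since $\dot\beta_\tau=e^{-2\tau}/\beta_\tau$, and writing $G(\cdot)=F(e^{-\tau}x+\beta_\tau\,\cdot)$,
\begin{equation*}
  \frac{d}{d\tau}P_\tau F(x)=\int_{\fs_{\eta,p}}\Bigl\langle\nabla F(e^{-\tau}x+\beta_\tau y),\,-e^{-\tau}x+\tfrac{e^{-2\tau}}{\beta_\tau}y\Bigr\rangle\dif\P_{\eta,p}(y),
\end{equation*}
where the $x$-term reproduces $-\langle x,\nabla P_\tau F(x)\rangle$ via the commutation $\nabla P_\tau F=e^{-\tau}P_\tau(\nabla F)$, while the $y$-term, after Gaussian integration by parts — the Stein identity $\int\langle G(y),y\rangle\dif\P_{\eta,p}=\int\trace_{\bl_{1,2}}(\nabla G(y))\dif\P_{\eta,p}$ applied to the above $G$ — reproduces $\trace_{\bl_{1,2}}(\nabla^{(2)}P_\tau F(x))$. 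Hence $\frac{d}{d\tau}P_\tau F(x)=LP_\tau F(x)$, and in particular $P_\tau F\in\Dom L$. For the trace-class assertion one reads off from the chaos interpretation of Mehler's formula that $\nabla^{(2)}P_\tau F(x)$ is Hilbert--Schmidt, and then refines this using the semigroup identity $P_\tau=P_{\tau/2}\circ P_{\tau/2}$: applying Mehler's formula to each factor exhibits $\nabla^{(2)}P_\tau F(x)$ as a composition of two operators on $\bl_{1,2}$, each of which — thanks to the radonifying property of $\iota_{\eta,p}$ (Lemma~\ref{lem_donsker_final:radon}) — is Hilbert--Schmidt, and a product of two Hilbert--Schmidt operators is trace-class.

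The step I expect to be the main obstacle is exactly this last one. A single application of Mehler's formula only displays $\nabla^{(2)}P_\tau F(x)$ as a Hilbert--Schmidt operator, and upgrading it to trace-class — which is what makes $\trace_{\bl_{1,2}}(\nabla^{(2)}P_\tau F(x))$ meaningful pointwise, and is the piece of Proposition~\ref{prop:shih} that genuinely feeds the Stein--Dirichlet machinery of Section~\ref{sec:impr-stein-meth} — requires both the factorization $P_\tau=P_{\tau/2}\circ P_{\tau/2}$ and the radonifying nature of the embedding $\bl_{1,2}\hookrightarrow\fs_{\eta,p}$, and is the heart of Shih's analysis. Everything else is routine Gaussian analysis built on the Mehler representation of the derivatives and on hypercontractivity.
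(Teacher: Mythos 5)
The paper does not actually prove Proposition~\ref{prop:shih}: it is imported verbatim from Shih \cite{shih_steins_2011} with the single sentence ``According to \cite{shih_steins_2011}, we have the following properties of $P_tF$,'' so there is no in-paper argument to compare yours against. That said, your sketch follows the route that Shih's proof genuinely takes --- Mehler's formula read as a Cameron--Martin translation of $y$ by $(se^{-t}/\beta_t)h$, differentiation under the integral sign producing Hermite-polynomial kernels with the prefactor $(e^{-t}/\beta_t)^k$, Gaussian integration by parts for the evolution equation, and the interplay between the semigroup factorization $P_t=P_{t/2}\circ P_{t/2}$ and the radonifying embedding $\iota_{\eta,p}$ for the trace-class upgrade --- and you correctly identify that last point as the only genuinely non-routine one.

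Two caveats on your execution, neither fatal to the strategy but both worth flagging. First, the domination and $\D_{k,2}$ arguments are not ``routine'' for $F$ merely in $L^1(\P_{\eta,p})$ as the statement allows: hypercontractivity gives no gain starting from $L^1$, and in infinite dimensions the law of $e^{-t}x+\beta_t y$ under $\P_{\eta,p}(\dif y)$ is \emph{singular} with respect to $\P_{\eta,p}$ (the scaling $\beta_t<1$ already forces this), so $\|F(e^{-t}x+\beta_t\,\cdot)\|_{L^2(\P_{\eta,p})}$ is not controlled by $\|F\|_{L^2(\P_{\eta,p})}$ for every fixed $x$; one only gets finiteness for $\P_{\eta,p}$-a.e.\ $x$, or for all $x$ under stronger hypotheses on $F$. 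Your hedge --- restricting to $F\in\Lip(d_{\fs_{\eta,p}})$, which by Fernique lies in every $L^q$ and for which $|F(e^{-t}x+\beta_t y)|\le |F(0)|+e^{-t}\|x\|+\beta_t\|y\|$ gives uniform-in-$x$ domination --- is exactly the right move, and is all that Section~\ref{sec:impr-stein-meth} ever uses. Second, your claim that a single application of Mehler's formula ``displays $\nabla^{(2)}P_tF(x)$ as Hilbert--Schmidt'' quietly relies on the second-chaos projection of $y\mapsto F(e^{-t}x+\beta_t y)$ being defined in $L^2$, which circles back to the first caveat; the clean mechanism, which you do eventually invoke, is to write $\nabla^{(2)}P_tF(x)$ as a composition through the diagram \eqref{eq_donsker:14}, i.e.\ as $\j_{\eta,p}^*\circ A\circ\iota_{\eta,p}$ with $A$ a bounded operator on the Banach level, and to use that the composition of two $\gamma$-radonifying maps is trace-class. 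With those two adjustments your outline is a faithful reconstruction of the cited result.
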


\section{Rate of convergence}
\label{sec:rate-convergence-1}
\subsection{Kolmogorov-Rubinstein distance}
\label{sec:kolm-rubinst-dist}
In \cite{lamperti_convergence_1962}, the proof of Lamperti's Theorem is given for one dimensional processes but it
can be straightforwardly adapted to $\R^d$-valued random walks and Brownian
motion: $X^\cA$ becomes the $\R^d$-valued process
\begin{equation*}
  X^\cA(t)=\sqrt{m}\, \sum_{a\in \cA} X_a \ r_{a_2}(t)\,e_{a_1}=\sum_{a\in \cA} X_a \, h_a(t)
\end{equation*}
where $h_a(t) =\sqrt{m}\  r_{a_2}(t)\, e_{a_1}$ and $(X_a,\, a\in \cA)$ is a family of independent identically distributed
random variables of mean $0$ and variance $1$. Furthermore, $B$ is the $d$-dimensional Brownian motion:
\begin{equation*}
  B(t)=\sum_{i=1}^d B_{i}(t) \, e_{i}.
\end{equation*}
The enriched Brownian motion $\eB$, is the $\G(\R^d)$-value process defined by
\begin{equation*}
  \eB(t)=1\oplus B(t)\oplus \sum_{i,j=1}^d \int_0^t B_i(s)\circ \dif B_j(s)\
  e_i\otimes e_j,\text{ for any } t\in [0,1],
\end{equation*}
where the stochastic integrals are to be understood in the
Stratonovitch sense. Theorem 13.32 of \cite{friz_multidimensional_2010} says that $S_2(X^\cA)$
converges to $\eB$ in some H\"older type spaces. Our primary goal is to give the
rate of this convergence. For, we need to define a distance between probability
measures over H\"older spaces. There are several possibilities of such a
definition, the best suited for an estimate by the Stein method  is the
Kolmogorov-Rubinstein\footnote{We stick to the denomination suggested in
  \cite{Villani2003} even if this distance is often called the Wasserstein distance.} distance:
\begin{definition}[Kolmogorov-Rubinstein distance]
\label{def_donsker:1}
  For $\mu$ and $\nu$ two probability measures on a metric space $(W,d_W)$, their Kolmogorov-Rubinstein
  distance is given by
  \begin{equation*}
    \distKR(\mu,\, \nu)=\sup_{F\in \Lip(d_W)}\int_W F\dif \mu-\int_W
    F\dif \nu,
  \end{equation*}
  where
  \begin{equation*}
    \Lip(d_W)=\Bigl\{F\, :\, W\to \R, \ \forall w,\, v\in W, \
    |F(w)-F(v)|\le d_W(w,\, v)\Bigr\}.
  \end{equation*}
\end{definition}
Theorem 11.3.3 of \cite{dudley_real_2002} states that the topology induced
by this distance on the set of probability measures on $W$ is the same
as the topology of the convergence in law whenever the metric
space $W$ is separable. Unfortunately, as we already mentioned, H\"older  spaces
are not separable, thus to have a meaningful result, we turn to work on  fractional Sobolev
spaces. It is of no importance since Sobolev embeddings ensure that convergence
in fractional Sobolev spaces induces convergence in H\"older spaces.

Our new goal is then to estimate the Kolmogorov-Rubinstein distance in $\G\fs_{\eta,p}$
between $\eB$ and $S_2(X^\cA)$. Remark that
\begin{multline*}
 \pi_2 S_2(X^\cA)=\pi_2\lS(X^\cA) \\ +2\sum_{a\in \cA}(X_a^2-1) \int_0^t h_a^m(s)\otimes \dif h_a^m (s)+\sum_{a\in \cA}h_a^m(t)\otimes h_a^m(t)\\=U_1^m+U_2^m+U_3^m.
\end{multline*}
On the other hand,
\begin{multline*}
   \eB(t)=\sum_{1\le i<j\le d}\left( \int_0^t B_i(s)\circ \dif B_j(s) -\int_0^t B_j(s)\circ \dif B_i(s)\right)\
  [e_i,\,  e_j]\\
  + 2 \sum_{i=1}^d \int_0^t B_i(s)\dif B_i(s) \ e_i\otimes e_i+ \sum_{i=1}^d
  t\,e_i\\ =\log \eB(t)+U_2+U_3.
\end{multline*}
where in $U_2$, the stochastic integral is taken in the It\^o sense.
Direct computations show that  $U_3^m-U_3$ tends to $0$ as $1/m$. The
sequence $U_2^m$ converges to $U_2$ as fast as  $X^\cA$ converges 
to $B$; a rate which is expected  and which will turn out to be much slower than $1/m$. In summary, the Kolmogorov-Rubinstein distance between $\eB$ and
$S_2(X^\cA)$ has the same asymptotic behavior as the distance between $\log\eB$
and $\lS(X^\cA)$. Our final objective is then to estimate the
Kolmogorov-Rubinstein distance between the distributions of $\log\eB$
and $\lS(X^\cA)$ in $\lG\fs_{\eta,p}.$
\subsection{Reduction to finite dimension}
\label{sec:reduction-fo-finite}

Should we
follow the same procedure as the one we used in \cite{coutin_steins_2013}, we
would face the same complications to compute the trace term in some infinite
dimensional space. We remark that $X^\cA$ belongs to the finite dimensional space
\begin{equation*}
  \V=\vect\{h_a^m ,\, a\in \cA\}\subset \bl_{1,2}.  
\end{equation*}
Consider $(g_n,\, n\ge 1)$ a complete orthonormal basis of $\V^\perp$ in $\bl_{1,2}$. The It\^o-Nisio Theorem says that $B$ can be represented as the $\fs_{\eta,p}$-convergent sum
\begin{equation*}
  B=\sum_{a\in \cA} \delta h_a^m \, h_a^m + \sum_{n=1}^\infty \delta g_n \ g_n=B_V+B_V^\perp,
\end{equation*}
where $\delta h$ is the Malliavin divergence (or Wiener integral) associated to $B$ (see \cite{Nualart1995b}).
It turns out that $B_V$ is also  the affine interpolation of
$B$ so that  we may use the results of \cite{friz_multidimensional_2010} to estimate the distance
between $\lS(B_V)$ and $\log\eB$. We can  then resort to the Stein's method in finite
dimension to estimate only the distance between $\lS(X^\cA)$ and~$\lS(B_V)$.
We can always write
\begin{multline*}
  \sup_{F\in \Lip(d_{\bfs_{\eta,p}})} \esp{F(\log\eB)}-\esp{F(\lS(X^\cA))}
\\\shoveleft{ \le
                                                                  \sup_{F\in
                                                                  \Lip(d_{\bfs_{\eta,p}})}
                                                                \esp{F(\log\eB)}-\esp{F(\lS(B_V))}}
 \\+\sup_{F\in
                                                                  \Lip(d_{\bfs_{\eta,p}})}
                                                                  \esp{F(\lS(B_V))}-\esp{F(\lS(X^\cA))}% \\
% &\le \esp{|\eB-\eB_V|_{\bfs_{\eta,p}}}+\sup_{G\in \Lip(\ell^p \otimes
%   \ell^\infty)} \esp{G\circ S_2(\alpha^m\,B_V)}-\esp{G\circ S_2(\alpha^m\,X^\cA)}
.
\end{multline*}
On the one hand, since $\log\eB$ and $\lS(B_V)$ live on the same probability
space, for $F\in \Lip(d_{\bfs_{\eta,p}})$, according to \cite[Proposition 13.20]{friz_multidimensional_2010},
\begin{multline}\label{eq_rough_donsker:3}
  \esp{F(\log\eB)}-\esp{F(\lS(B_V))}\le
  \esp{\|\log\eB-\lS(B_V)\|_{\bfs_{\eta,p}}}\\ \le
  c\, m^{-(1/2-\eta)}.
\end{multline}
It remains to  estimate
\begin{equation*}
   \sup_{F\in\Lip(d_{\bfs_{\eta,p}})}\esp{F(\lS(B_V))}-\esp{F(\lS(X^\cA))}.
 \end{equation*}
Actually, for technical reasons, we could not make this estimate for $F$ only
Lipschitz. As for the multivariate Gaussian approximation, we must have a
condition on the regularity of the second derivative of test functions.
\begin{definition}
  Let
  \begin{math}
    \bl_{2,2}^{\pm}=( I_{0^{+}}^{1}\circ I_{1^{-}}^{1})(L^{2}).
  \end{math}
  For $F\, :\,\bfs_{\eta,p} \longrightarrow \R $, let $\lF=F\circ \lS$ as
  described in Figure~\ref{fig:lf}.

  \begin{figure}[!ht]
    \centering
      \begin{tikzcd}
      \fs_{\eta,p} \arrow[rd,"\lF" below] \arrow[r,  "\lS"] & \bfs_{\eta,p} \arrow[d, "F"] \\
      & \mathbf R
  \end{tikzcd}
    \caption{Definition of $\lF$.}
    \label{fig:lf}
  \end{figure}
  
  Let
  $\Sigma_{\eta,p}$ be the set of functions $F\in \Lip(\bfs_{\eta,p})$ such that $\nabla^{(2)}\lF$ belongs to $L^{2}(\fs_{\eta,p};\,
  \bl_{2,2}^{\pm}\otimes\bl_{2,2}^{\pm})$  and
  \begin{multline*}
    \left| \<(\nabla^{(2)}\lF)(x)-(\nabla^{(2)}\lF)(x+g),\, h\otimes k\>_{\bl_{1,2}^{\otimes 2}} \right| \\
\le \|h\|_{L^{2}}\,\|k\|_{L^{2}} \  \|\lS(x)-\lS(x+g)\|_{\bfs_{\eta,p}},
\end{multline*}
 for any $x\in \fs_{\eta,p}$,
  for any $g\in \bl_{1,2}$, for any $h,k\in L^{2}$.
\end{definition}
\begin{remark}
  If $\lF$ is thrice differentiable in the direction of $\bl_{1,2}$ with for any
  $x\in \fs_{\eta,p}$, 
  $\nabla^{(j)}\lF(x)\in (\bl_{2,2}^{\pm})^{\otimes j}$ for any $j=1,2,3$ and
  \begin{equation*}
   \|\nabla^{(3)}F\|_{L^{\infty}(W;(\bl_{2,2}^{\pm})^{\otimes 3})}<\infty
 \end{equation*}
 then the fundamental theorem of calculus entails that
  \begin{multline*}
    \left| \<(\nabla^{(2)}\lF)(x)-(\nabla^{(2)}\lF)(x+g),\, h\otimes k\>_{\bl_{1,2}^{\otimes 2}} \right| \\
\le \|\nabla^{(3)}F\|_{L^{\infty}(W;(\bl_{2,2}^{\pm})^{\otimes 3})} \|h\|_{L^{2}}\,\|k\|_{L^{2}} \  \|g\|_{L^{2}},
\end{multline*}
so that $\|\nabla^{(3)}F\|_{L^{\infty}(W;(\bl_{2,2}^{\pm})^{\otimes
    3})}^{-1}\lF$ belongs to $\Sigma_{\eta,p}$.
\end{remark}
 Section~\ref{sec:impr-stein-meth} is devoted to prove our main theorem:
 \begin{theorem}
   \label{thm:final}
   Let $(\eta,p)\in \Lambda$ and $p\ge 3$. If $X_a$ belongs to $L^p$, then 
   \begin{equation}
\label{eq_donsker:16}       \sup_{F\in \Sigma_{\eta,p}}\esp{F(\lS(B_V))}-\esp{F(\lS(X^\cA))}\le  c\,   \|X_a\|_{L^{p}}^{3}\,  m^{-(1/2-\eta)}.
   \end{equation}
 \end{theorem}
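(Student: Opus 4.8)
The plan is to run the Stein–Dirichlet method in the finite-dimensional space $\V = \vect\{h_a^m,\, a\in\cA\}$, comparing $\lS(X^\cA)$ with $\lS(B_V)$, where $B_V = \sum_{a\in\cA}\delta h_a^m\, h_a^m$ is the Gaussian analogue. Fix $F\in\Sigma_{\eta,p}$ and write $\lF = F\circ\lS$, so $\lF$ is a function on $\fs_{\eta,p}$ whose only relevant dependence is through the finite family $(\delta h_a^m)_{a\in\cA}$. The starting point is the Stein equation associated to the Ornstein–Uhlenbeck semigroup $P=(P_\tau)$: for $G = \lF - \esp{\lF(B_V)}$, one sets $\Psi = -\int_0^\infty (P_\tau G)\,\dif\tau$, so that $LG$-type identities give $\esp{\lF(B_V)} - \esp{\lF(X^\cA)} = \esp{L\Psi(X^\cA)}$, i.e. equals
\begin{equation*}
  \esp{-\<X^\cA,\nabla\Psi(X^\cA)\>} + \esp{\trace_{\bl_{1,2}}\nabla^{(2)}\Psi(X^\cA)}.
\end{equation*}
Here Proposition~\ref{prop:shih} guarantees $P_\tau G$ is smooth, with trace-class second gradient, so $\Psi$ is well-defined and in the domain of $L$; the regularity transferred from $\Sigma_{\eta,p}$ (the Lipschitz-in-$\bfs_{\eta,p}$ control on $\nabla^{(2)}\lF$) is what will survive the semigroup averaging and produce the correct bound.

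Next I would exploit the independence structure: since $X^\cA = \sum_a X_a h_a^m$ with the $X_a$ i.i.d., centered, variance one, a discrete integration-by-parts (the exchangeable-pairs / Lindeberg swapping trick, or equivalently summing $\esp{X_a \,\partial_a\Psi}$ and Taylor-expanding to second order) rewrites $\esp{-\<X^\cA,\nabla\Psi(X^\cA)\>}$ so that it cancels the diagonal part of the trace term up to a third-order remainder. Concretely, for each $a$ one writes $\esp{X_a\,\partial_{h_a^m}\Psi(X^\cA)}$, conditions on $X^\cA$ with the $a$-th coordinate removed, and Taylor-expands $\partial_{h_a^m}\Psi$ along the $h_a^m$ direction to second order; the zeroth order vanishes by centering, the first order reproduces a Hessian diagonal entry matching $\trace_{\bl_{1,2}}$ (because $\|h_a^m\|_{\bl_{1,2}}$ is normalized and the $g_n$-directions are shared Gaussian), and what is left is a remainder involving $\nabla^{(2)}\Psi$ evaluated at two nearby points differing by $X_a h_a^m$, times $\esp{|X_a|^3}$. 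This is where the factor $\|X_a\|_{L^p}^3$ enters and why $p\ge 3$ is needed.

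The remainder is then controlled by the defining property of $\Sigma_{\eta,p}$: the increment $\<(\nabla^{(2)}\lF)(x) - (\nabla^{(2)}\lF)(x+g),\, h\otimes k\>$ is bounded by $\|h\|_{L^2}\|k\|_{L^2}\,\|\lS(x)-\lS(x+g)\|_{\bfs_{\eta,p}}$, and the same estimate is inherited by $\nabla^{(2)}\Psi = -\int_0^\infty \nabla^{(2)}P_\tau G\,\dif\tau$ after checking that $P_\tau$ contracts appropriately (the $e^{-\tau}$ factors from differentiating under $P_\tau$ make the $\tau$-integral converge, and the key point is that $\|\lS(x)-\lS(x+g)\|_{\bfs_{\eta,p}}$ for $g$ a single term $X_a h_a^m$ is itself $O(m^{-(1/2-\eta)})$ in $L^p$, exactly by Lemma~\ref{lem:normeHDansDual} and the martingale/Burkholder computation behind Theorem~\ref{thm:majo_var} applied now to the iterated-integral (antisymmetric) part). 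Summing over $a\in\cA$ — there are $dm$ indices, each contributing $m^{-3/2}$ worth of $L^p$-smallness from the three $h_a^m$-factors, and the $\bfs_{\eta,p}$-norm of a one-term increment carries the $m^{-(1/2-\eta)}$ — and carefully bookkeeping the powers of $m$ yields the claimed bound $c\,\|X_a\|_{L^p}^3\, m^{-(1/2-\eta)}$.

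The main obstacle, I expect, is controlling the remainder term in the $\bfs_{\eta,p}$-norm rather than just a Hilbertian norm: one must show that $\nabla^{(2)}\Psi$, built from the non-Gaussian semigroup flow applied to $\lF=F\circ\lS$, still satisfies the Lipschitz-type increment bound of $\Sigma_{\eta,p}$ uniformly in $\tau$, and that the relevant increment $\|\lS(X^\cA) - \lS(X^\cA + X_a h_a^m)\|_{\bfs_{\eta,p}}$ — which involves the \emph{antisymmetric iterated integral} of a perturbed path — admits an $L^p$ bound of order $m^{-(1/2-\eta)}$ uniformly over $a$. That is precisely the point where the symmetric/antisymmetric split of the signature pays off: the antisymmetric part is handled by a discrete martingale (Burkholder–Davis–Gundy) argument in the spirit of Theorem~\ref{thm:majo_var}, giving the $|t-s|^{p/2}$ scaling that makes the double integral defining $\|\cdot\|_{\bfs_{\eta,p}}$ converge for $\eta<1/2$, while the symmetric part ($U_2^m, U_3^m$) is already dispatched as an ordinary $\R^d$-valued process.
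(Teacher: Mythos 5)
Your proposal follows essentially the same route as the paper: the Stein--Dirichlet representation $\esp{\lF(B_V)}-\esp{\lF(X^\cA)}=\esp{\int_0^\infty LP_t\lF(X^\cA)\dif t}$, the independence/Taylor-expansion rewriting of $LP_t\lF$ into second-gradient increments paired with $h_a^m\otimes h_a^m$ (Lemma~\ref{lem:transformationL}), the $\Sigma_{\eta,p}$ Lipschitz property to convert those increments into $\|h_a^m\|_{L^2}^2\,\|\lS(x)-\lS(x+rX_ah_a^m)\|_{\bfs_{\eta,p}}$, and the split of that last norm into a level-one part (Lemma~\ref{lem:normeHDansDual}) and an antisymmetric level-two part handled by the Burkholder--Davis--Gundy counting argument (Lemma~\ref{lem:majo_XaXb}). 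The one correction to your bookkeeping: the three occurrences of $h_a^m$ do not each contribute $m^{-1/2}$; two are measured in $L^2$ (giving $m^{-1}$) and the third sits inside the $\bfs_{\eta,p}$-increment (giving $m^{-(1/2-\eta)}$), so summing over the $dm$ indices yields $dm\cdot m^{-1}\cdot m^{-(1/2-\eta)}=d\,m^{-(1/2-\eta)}$, exactly the claimed rate, whereas your stated count $dm\cdot m^{-3/2}\cdot m^{-(1/2-\eta)}$ would overshoot.
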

 \begin{remark}
   Note that the integrability of the $X_a$'s does enlarge the spaces in
 which the convergence holds, as in the Lamperti Theorem, but it does not modify the rate of convergence.   
\end{remark}
We now detail the proofs of the main estimates.
\section{Stein method}
\label{sec:impr-stein-meth}

We have to estimate
\begin{equation*}
  \sup_{F\in{\Sigma}_{\eta,p}}\esp{F(\lS(B_V))}-\esp{F(\lS(X^\cA))}.
\end{equation*}
Recall that
\begin{equation*}
  X^\cA=\sum_{a\in \cA}X_a h_a^m,\   B_\V=\sum_{a\in \cA}\delta h_a^m \  h_a^m \text{ and } V=\vect\{h_a^m , \, a\in \cA\}.
\end{equation*}
% Recall that $V=\vect\{h_a^m , \, a\in \cA\}$.
Let $\fV$ (respectively $\fH$) be
the space $V$ equipped with the norm of $\fs_{\eta,p}(\R^d)$ (respectively of
$\bl_{1,2}(\R^d)$). Since $V$ is finite dimensional, the difference between
$\fV$ and $\fH$ is tenuous but still of some importance to clarify the
situation. We have the following situation
\begin{center}
  \begin{tikzcd}
    \fs_{\eta,p}^*\subset W^*
    \arrow[r,"\j_{\eta,p}^*"]  & \fH \arrow[d,"\iota_{\eta,p}"] \subset \bl_{1,2} &&\\
    & W\subset \fs_{\eta,p} \arrow[r,"\lS"] & \bfs_{\eta,p} \arrow[r,"F"] & \R.
  \end{tikzcd}
\end{center}
For the sake of notations, we set $\lF=F\circ \lS$.%  Note that for $F\in
% \Lip(d_{\bfs_{\eta,p}})$, for any $x,y \in \fs_{\eta,p}$,
% \begin{equation*}
%   |\lF(x)-\lF(y)|\le \|\lS(x)-\lS(y)\|_{\bfs_{\eta,p}}.
% \end{equation*}
The Stein-Dirichlet representation formula (see \cite{Decreusefond2015c}) then
stands that
\begin{equation}\label{eq_donsker_wiener:2}
  \esp{\lF(\wB_V)}-\esp{\lF(\wX^\cA)}=\esp{ \int_0^\infty
    LP_t \lF(\wX^\cA)\dif t }.
  % % % \\
% %   % =\esp{P_{t} \lF(\wX^\cA)-\lF(\wX^\cA)}+\esp{\int_{t}^\infty LP_t
% %   %   \lF(\wX^\cA)\dif t},
  \end{equation}
%for any ${t}>0$.

\subsection{Technical lemmas}
\label{sec:technical}

\emph{In what follows, $c$ is a constant which may vary from line to line. We
  denote $\P_{\eta,p}^{V}$ the restriction of $\P_{\eta,p}$ to $V$: It is the
  distribution of $B_{V}$.}

With the notations of Proposition~\ref{prop:shih}, we
have: 
\begin{lemma}
  \label{lem:transformationL}
  For any $F\in \Lip(d_{\fs_{\eta,p}})$, for any $t>0$,
  \begin{multline*}
    \esp{LP_tF(X^\cA)}\\
    \shoveleft{ =\sum_{a\in \cA}\int_0^1\mathbf E\left[ X_a^2\,\left\langle \nabla^{(2)}P_tF(X^\cA_{\neg
          a})-\nabla^{(2)}P_tF(X^\cA_{\neg a}+rX_ah_a^m),\right.\right.}\\
  \shoveright{        \left.\left. h_a^m\otimes h_a^m
      \vphantom{ \nabla^{(2)}P_tF(X^\cA X_a^2}\right\rangle_{\bl_{1,2}}\right]\dif r}\\
    +\sum_{a\in \cA}\esp{\left\langle
        \nabla^{(2)}P_tF(X^\cA)-\nabla^{(2)}P_tF(X^\cA_{\neg a}), h_a^m\otimes
        h_a^m \right\rangle_{\bl_{1,2}}}.
  \end{multline*}
\end{lemma}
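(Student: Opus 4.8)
The plan is to evaluate $LP_tF$ pointwise at the point $X^\cA$ and then take expectations, exploiting that $X^\cA$ and $B_V$ live in the finite--dimensional subspace $V=\vect\{h_a^m,\ a\in\cA\}$, on which $(h_a^m)_{a\in\cA}$ is an orthonormal basis; here $L,P_t,\nabla,\nabla^{(2)}$ are those of Proposition~\ref{prop:shih} read on the finite--dimensional space $\fH=V$. First I would record the orthonormality: since $h_a^m=\sqrt m\,r_{a_2}^m\,e_{a_1}$ and the $\bl_{1,2}$--norm of a function is the $L^2$--norm of its derivative, $\|h_a^m\|_{\bl_{1,2}}^2=m\,\|\car_{((a_2-1)/m,\,a_2/m]}\|_{L^2}^2=1$, while for $a\neq b$ the relevant subintervals are disjoint (or $e_{a_1}\perp e_{b_1}$), so $\langle h_a^m,h_b^m\rangle_{\bl_{1,2}}=0$. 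Consequently $\delta h_a^m(X^\cA)=\langle X^\cA,h_a^m\rangle_{\bl_{1,2}}=X_a$; since $X^\cA\in V\subset\bl_{1,2}$, the divergence term of $L$ is the genuine inner product $\sum_{a}X_a\langle\nabla P_tF(X^\cA),h_a^m\rangle_{\bl_{1,2}}$ and the trace term runs over $(h_a^m)_{a\in\cA}$, so that
\begin{equation*}
  LP_tF(X^\cA)=-\sum_{a\in\cA}X_a\,\bigl\langle\nabla P_tF(X^\cA),h_a^m\bigr\rangle_{\bl_{1,2}}+\sum_{a\in\cA}\bigl\langle\nabla^{(2)}P_tF(X^\cA),h_a^m\otimes h_a^m\bigr\rangle_{\bl_{1,2}}.
\end{equation*}

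Next I would run the classical ``leave--one--out'' argument coordinate by coordinate. Fix $a\in\cA$ and write $X^\cA=X^\cA_{\neg a}+X_ah_a^m$ with $X^\cA_{\neg a}:=\sum_{b\neq a}X_bh_b^m$ independent of $X_a$. The fundamental theorem of calculus in the direction $h_a^m$ gives
\begin{multline*}
  \bigl\langle\nabla P_tF(X^\cA),h_a^m\bigr\rangle_{\bl_{1,2}}=\bigl\langle\nabla P_tF(X^\cA_{\neg a}),h_a^m\bigr\rangle_{\bl_{1,2}}\\
  +\,X_a\int_0^1\bigl\langle\nabla^{(2)}P_tF(X^\cA_{\neg a}+rX_ah_a^m),h_a^m\otimes h_a^m\bigr\rangle_{\bl_{1,2}}\dif r;
\end{multline*}
multiplying by $X_a$, taking expectations and using $\esp{X_a}=0$ together with the independence of $X_a$ and $X^\cA_{\neg a}$ kills the first term on the right. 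Substituting this into the previous display, then using $\esp{X_a^2}=1$ and the same independence to write $\esp{\langle\nabla^{(2)}P_tF(X^\cA_{\neg a}),h_a^m\otimes h_a^m\rangle_{\bl_{1,2}}}=\int_0^1\esp{X_a^2\langle\nabla^{(2)}P_tF(X^\cA_{\neg a}),h_a^m\otimes h_a^m\rangle_{\bl_{1,2}}}\dif r$, adding and subtracting this quantity makes the $a$--th contribution to $\esp{LP_tF(X^\cA)}$ collapse to
\begin{multline*}
  \esp{\bigl\langle\nabla^{(2)}P_tF(X^\cA)-\nabla^{(2)}P_tF(X^\cA_{\neg a}),h_a^m\otimes h_a^m\bigr\rangle_{\bl_{1,2}}}\\
  +\int_0^1\esp{X_a^2\,\bigl\langle\nabla^{(2)}P_tF(X^\cA_{\neg a})-\nabla^{(2)}P_tF(X^\cA_{\neg a}+rX_ah_a^m),h_a^m\otimes h_a^m\bigr\rangle_{\bl_{1,2}}}\dif r,
\end{multline*}
and summing over $a\in\cA$ yields the statement.

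This lemma presents no real difficulty; the only care needed is the analytic justification of the steps above, and it is entirely supplied by Proposition~\ref{prop:shih} together with $F$ being $1$--Lipschitz. For fixed $t>0$, the smoothing of the Ornstein--Uhlenbeck semigroup (Mehler's formula) bounds $\langle\nabla^{(2)}P_tF(\cdot),h_a^m\otimes h_a^m\rangle_{\bl_{1,2}}$ uniformly in its argument --- using that $\nabla F$ is bounded, which holds because $\bl_{1,2}\hookrightarrow\fs_{\eta,p}$ continuously --- and this, together with $X_a\in L^2$, justifies differentiating under $\esp{\cdot}$, exchanging $\esp{\cdot}$ with $\int_0^1\dif r$ via Fubini, and applying the fundamental theorem of calculus along $h_a^m$. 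The one point that is conceptual rather than computational, and the only place the earlier work really enters, is that the operators of Proposition~\ref{prop:shih} must be read on the finite--dimensional space $\fH=V$ (where the trace of $L$ is over the basis $(h_a^m)_{a\in\cA}$): this is licit precisely because $X^\cA$ and $B_V$ take their values in $V$, i.e. it is exactly the reduction to finite dimension of Section~\ref{sec:reduction-fo-finite}.
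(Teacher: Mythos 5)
Your proposal is correct and follows essentially the same route as the paper's proof: expand $L$ on the finite-dimensional space $V$, use independence and $\esp{X_a}=0$ to replace $\nabla P_tF(X^\cA)$ by the increment from $X^\cA_{\neg a}$, apply the fundamental theorem of calculus along $h_a^m$, and conclude by the cancellations coming from $\esp{X_a^2}=1$. The extra details you supply (orthonormality of the $h_a^m$ in $\bl_{1,2}$ and the justification of the interchanges) are consistent with, and slightly more explicit than, the paper's argument.
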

\begin{proof}
  Recall that
  \begin{multline*}
    LP_tF(X^\cA)=-\< X^\cA, \nabla P_tF(X^\cA)\>_{\bl_{1,2}}\\+\sum_{a\in \cA}\< h_a^m\otimes h_a^m,\, \nabla^{(2)} P_tF(X^\cA)\>_{\bl_{1,2}\otimes \bl_{1,2}}.
  \end{multline*}
  By independence,
  \begin{multline*}
    \esp{\< X^\cA, \nabla P_tF(X^\cA)\>_{\bl_{1,2}}}\\
    = \sum_{a\in \cA} \esp{X_a\, \< h_a^m,\, \nabla P_tF(X^\cA)-\nabla
      P_tF(X^\cA_{\neg a}) \>_{\bl_{1,2}}}.
  \end{multline*}
  The fundamental theorem of calculus  now states that
  \begin{multline*}
   \esp{\< X^\cA, \nabla P_tF(X^\cA)\>_{\bl_{1,2}}}\\= \sum_{a\in \cA}\int_0^1 \esp{X_a^2\,
     \< h_a^m\otimes h_a^m,\, \nabla^{(2)} P_tF(X^\cA_{\neg a}+rX_a h_a^m\>_{\bl_{1,2}}}\dif r.
     \end{multline*}
     Since $\esp{X_a^2}=1$, the result follows by successive cancellations.
   \end{proof}

Introduce for any $a\prec b\in \cA$ and any $0\le s<t\le 1$,
\begin{multline*}
%  \label{eq_donsker:1}
  \gc_{a,b}^m (s,t)\\=\left\{ \int_s^t \Bigl(h_{a_2}^m(r)-h_{a_2}^m(s)\Bigr)\dif
    h_{b_2}^m(r)- \int_s^t \Bigl(h_{b_2}^m(r)-h_{b_2}^m(s)\Bigr)\dif
    h_{a_2}^m(r)\right\} \ [e_{a_1},\, e_{b_1}].
\end{multline*}
\begin{lemma}
  \label{lem:majo_XaXb}
  Let $(U_a,\, a\in \cA)$ be a family of independent identically distributed
  random variables which belong to $L^p$. Then,
  \begin{equation*}
    \esp{     \left|  U_a\, \sum_{b\prec a}
        U_b\, \gc^m_{a,b} \right|_{\fs_{\eta,p}}^p }\le c \, m^{-(1/2-\eta)}\, \esp{|U_a|^p}^2.
  \end{equation*}
\end{lemma}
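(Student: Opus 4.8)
The plan is to observe that, for the pairs $b\prec a$ occurring in the sum, $\gc_{a,b}^m$ is not a genuine iterated integral at all, and then to finish with Lemma~\ref{lem:normeHDansDual} and the Burkholder--Davis--Gundy/convexity estimate already used in the proof of Theorem~\ref{thm:majo_var}. The key point is that $b\prec a$ forces $b_2<a_2$, so the derivatives of the two coordinate paths $h_{a_2}^m$ and $h_{b_2}^m$ are supported on the \emph{disjoint} consecutive intervals $((b_2-1)/m,b_2/m]$ and $((a_2-1)/m,a_2/m]$, the first lying entirely to the left of the second (see~\eqref{eq_donsker:11}). Reading $\gc_{a,b}^m$ as the path $t\mapsto\gc_{a,b}^m(0,t)$: the integrator $\dif h_{b_2}^m$ is carried by a set on which $h_{a_2}^m$ vanishes, so the first integral is zero for every $t$; the integrator $\dif h_{a_2}^m$ is carried by a set on which $h_{b_2}^m$ is already at its constant maximal value, so the second integral is a constant multiple of $r_{a_2}^m(t)$. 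This is the computation behind~\eqref{eq_donsker:2}, and it yields $\gc_{a,b}^m=-\,r_{a_2}^m\,[e_{a_1},e_{b_1}]$, hence
\[
   U_a\sum_{b\prec a}U_b\,\gc_{a,b}^m=-\,U_a\,r_{a_2}^m\,\zeta,\qquad
   \zeta:=\sum_{b\prec a}U_b\,[e_{a_1},e_{b_1}]\in\R^d\otimes\R^d,
\]
where $\zeta$ involves only the variables $U_{(b_1,b_2)}$ with $b_1<a_1$ and is therefore independent of $U_a$. I would flag this collapse as the one point that has to be spotted; afterwards nothing infinite-dimensional or rough-path-theoretic remains.

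For the norm, I would use that $\|f\,v\|_{\fs_{\eta,p}}=\|f\|_{\fs_{\eta,p}}\,|v|$ for a scalar path $f$ and a fixed tensor $v$, so that, conditionally on $\zeta$,
\[
   \Bigl\|U_a\sum_{b\prec a}U_b\,\gc_{a,b}^m\Bigr\|_{\fs_{\eta,p}}
   =|U_a|\,|\zeta|\,\|r_{a_2}^m\|_{\fs_{\eta,p}} .
\]
Since $r_{a_2}^m$ is the primitive $h_{(a_2-1)/m,\,a_2/m}$ of a step function on an interval of length $1/m$, Lemma~\ref{lem:normeHDansDual} gives $\|r_{a_2}^m\|_{\fs_{\eta,p}}\le c\,m^{-(1-\eta)}$. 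Raising to the power $p$, taking expectations, and using the independence of $U_a$ and $\zeta$,
\[
   \esp{\Bigl\|U_a\sum_{b\prec a}U_b\,\gc_{a,b}^m\Bigr\|_{\fs_{\eta,p}}^p}
   \le c\,m^{-p(1-\eta)}\,\esp{|U_a|^p}\,\esp{|\zeta|^p} .
\]

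It then remains to bound $\esp{|\zeta|^p}$, which is where the martingale argument of Theorem~\ref{thm:majo_var} is reused. After fixing any enumeration of the index set $\{(b_1,b_2)\in\cA:\ b_1<a_1,\ b_2<a_2\}$, whose cardinality is at most $dm$, the partial sums of $\zeta=\sum_b U_b\,[e_{a_1},e_{b_1}]$ form a discrete $\R^d\otimes\R^d$-valued martingale with deterministic increment directions; Burkholder--Davis--Gundy bounds $\esp{|\zeta|^p}$ by $c\,\esp{\bigl(\sum_b|U_b|^2\bigr)^{p/2}}$, and the convexity inequality used in the proof of Theorem~\ref{thm:majo_var} (at most $dm$ summands, each in $L^p$) bounds the latter by $c\,(dm)^{p/2}\esp{|U_a|^p}$, i.e.\ $\esp{|\zeta|^p}\le c\,m^{p/2}\esp{|U_a|^p}$ with $d$ fixed. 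Combining the two displays,
\begin{multline*}
   \esp{\Bigl\|U_a\sum_{b\prec a}U_b\,\gc_{a,b}^m\Bigr\|_{\fs_{\eta,p}}^p}
   \le c\,\esp{|U_a|^p}^2\,m^{\,p/2-p(1-\eta)}\\
   = c\,\esp{|U_a|^p}^2\,m^{-p(1/2-\eta)}
   \le c\,\esp{|U_a|^p}^2\,m^{-(1/2-\eta)},
\end{multline*}
the last step because $p\ge1$, $m\ge1$ and (as everywhere here) $\eta<1/2$, so $p(1/2-\eta)\ge 1/2-\eta>0$; this is the claim, in fact with the better exponent $p(1/2-\eta)$. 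The only real obstacle is the first step: seeing that, because $h_{a_2}^m$ and $h_{b_2}^m$ are supported on disjoint consecutive intervals whenever $a_2\neq b_2$, the L\'evy-area-type term $\gc_{a,b}^m$ degenerates to a scalar multiple of the step-function primitive $r_{a_2}^m$; everything after that is Lemma~\ref{lem:normeHDansDual} plus the Burkholder--Davis--Gundy/convexity estimate of Theorem~\ref{thm:majo_var}.
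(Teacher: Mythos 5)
Your proof is correct in substance, and it reaches the bound by a genuinely cleaner route than the paper's. The paper never computes $\gc_{a,b}^m$ in closed form: for each fixed $(s,t)$ it bounds $\esp{|U_a\sum_{b\prec a}U_b\,\gc_{a,b}^m(s,t)|^p}$ by independence plus Burkholder--Davis--Gundy, then counts how many $b$ give a nonzero $\gc_{a,b}^m(s,t)$ (one if $|t-s|\le 1/m$, of order $dm|t-s|$ otherwise) and bounds each nonzero term ($m|t-s|^2$, resp. $1/m$), and finally integrates the resulting $|t-s|^{p/2}$-type bound against $\mu_{\eta,p}$. Your observation that the cross-area degenerates completely --- first integral identically zero, second integral constant on the support of $\dif h^m_{a_2}$ because the two supports are disjoint and ordered --- is exactly the structural fact hiding behind those counting estimates; making it explicit lets you outsource the deterministic part to Lemma~\ref{lem:normeHDansDual} and isolate all the randomness in the single tensor $\zeta$. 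Both arguments gain the crucial factor $m^{p/2}$ (instead of $m^{p}$) from BDG applied to the sum over $b$, and both in fact deliver the stronger exponent $m^{-p(1/2-\eta)}$; the stated $m^{-(1/2-\eta)}$ is what is actually consumed after the $1/p$-th root taken when the lemma is applied to the term $A_2$ in the proof of Theorem~\ref{thm:final}.

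One caveat. You read $\gc_{a,b}^m$ as the path $t\mapsto\gc_{a,b}^m(0,t)$ and measure it through the increments $\gc_{a,b}^m(0,t)-\gc_{a,b}^m(0,s)$, whereas the object the lemma is applied to in $A_2$ is the two-parameter area increment $\gc_{a,b}^m(s,t)$ integrated against $\mu_{\eta,p}$, and the two differ by the Chen cross-term. Explicitly, the same support computation gives
\begin{equation*}
  \gc_{a,b}^m(s,t)=-m\Bigl(\tfrac1m-r_{b_2}^m(s)\Bigr)\bigl(r_{a_2}^m(t)-r_{a_2}^m(s)\bigr)\,[e_{a_1},e_{b_1}],
\end{equation*}
so relative to your formula an $s$-dependent weight $1-m\,r^m_{b_2}(s)\in[0,1]$ appears inside the sum over $b$, and $\zeta$ must be replaced by $\zeta_s=\sum_{b\prec a}U_b\,(1-m\,r^m_{b_2}(s))\,[e_{a_1},e_{b_1}]$, which no longer factors out of the double integral. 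This does not damage the argument: the weights are deterministic and bounded by $1$, so your BDG/convexity estimate yields $\esp{|\zeta_s|^p}\le c\,m^{p/2}\esp{|U_a|^p}$ uniformly in $s$, and the rest goes through verbatim --- but the adjustment should be stated. (A further remark, which applies equally to the paper's own proof: the martingale step requires the $U_b$ to be centered, a hypothesis missing from the statement of the lemma; without it the sum over the $O(dm)$ indices $b$ contributes $m^{p}$ rather than $m^{p/2}$ and the estimate fails.)
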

\begin{proof}
  By independence and as in the proof of Theorem~\ref{thm:majo_var},
  \begin{equation*}
    \esp{\left|  U_a\, \sum_{b\prec a}
        U_b\, \gc^m_{a,b}(s,t)\right|^{p}} \le c \, \esp{|U_a|^p}\esp{\left(\sum_{b\prec a} |U_b|^2 |\gc^m_{a,b}(s,t)|^2  \right)^{p/2}}.
  \end{equation*}
  Since $a$ is fixed and $b\prec a$, if $|t-s|\le 1/m$, $\gc_{a,b}^m$ is not
  zero only for $b_2=a_2-1$ and then, it is bounded by $m|t-s|^2$. If $|t-s|\ge
  1/m$, $\gc^m _{a,b}$ is not zero for at most $[dm|t-s|]$ values of $b$ and
  then, each term is bounded by $1/m$. Hence
  \begin{equation*}
    \esp{  \left( \sum_{b\prec a} |U_b|^2 |\gc^m_{a,b}(s,t)|^2\right)^{p/2} }\le c\,  m^{-p/2}(m|t-s|)^{p/2}\esp{|U_b|^p}.
  \end{equation*}
  The result follows by integration with respect to $\mu_{\eta,p}$.
\end{proof}

  \subsection{Proof of the main theorem}
\label{sec:main}
   The result of Lemma~\ref{lem:transformationL} raises a problem which did not
   exist in finite dimension: There is no apparent $m^{-1/2}$ factor which gives
   the rate of convergence after applying a Taylor expansion of the convenient order.
Said otherwise, there is no  clue that  the difference between $X^{\cA}$ and
$X^{\cA}_{\neg a}$ should be small. Actually, the $m^{-1/2}$ factor is hidden in
the $h_{a}^{m}$'s whose $L^{\infty}$ norm is exactly $m^{-1/2}$. But the
scalar product we have introduced involves their $\bl_{1,2}$ norm, which is $1$.
The necessary degree of freedom is given here by the possibility to consider
$h_{a}^{m}$ as an element of another functional space. We borrowed this idea from
\cite{shih_steins_2011}, our presentation being hopefully more straightforward. % Namely, we have the
\begin{proof}[Proof of Theorem \protect{\ref{thm:final}}]
   For $t \ge 0$, $r\in [0,1]$, $y\in \fs_{\eta,p}$, let
  \begin{equation}\label{eq_donsker:4}
    X^\cA_{\neg a}(t , r, y)=e^{-t }(X^\cA_{\neg a}+rX_ah_a^{m})+\beta_{t}y.
  \end{equation}
  Lemma~\ref{lem:transformationL} and Proposition~\ref{prop:shih} imply that
  \begin{multline*}
    \esp{LP_tF(X^\cA)}\\
    \shoveleft{   = e^{-2t}\ \sum_{a\in \cA}\int_0^1\mathbf E\left[
      X_{a}^{2}\int_{\fs_{\eta,p}}\< \nabla^{(2)}F(X^\cA_{\neg a}\bigl(t , 0, y)\bigr)\right.\right.}\\
  \shoveright{\left.
      \left. -\nabla^{(2)}F\bigl(X^\cA_{\neg a}(t , r, y)\bigr),\,
        h_{a}^{m}\otimes h_{a}^{m} \>_{\bl_{1,2}^{\otimes 2}}\dif
        \P_{\eta,p}^{V}(y)\right]}\\
        \shoveleft{   = e^{-2t}\ \sum_{a\in \cA}\int_0^1\mathbf E\left[
      X_{a}^{2}\int_{\fs_{\eta,p}}\< \nabla^{(2)}F(X^\cA_{\neg a}\bigl(t , 1, y)\bigr)\right.\right.}\\
  \left.
      \left. -\nabla^{(2)}F\bigl(X^\cA_{\neg a}(t , 0, y)\bigr),\,
        h_{a}^{m}\otimes h_{a}^{m} \>_{\bl_{1,2}^{\otimes 2}}\dif \P_{\eta,p}^{V}(y)\right]
    \end{multline*}
    Since $\|h_{a}^{m}\|_{L^{2}}=m^{-1/2}$, for $F\in {\Sigma}_{\eta,p}$,
    \begin{multline}
      \label{eq_donsker:7}
      \left| \esp{LP_t\lF(X^\cA)} \right|\\ \le c\, e^{-2t}\sup_{\substack{r\in [0,1]\\ y,z\in \fs_{\eta,p}}} \esp{(1+|X_a|^2) \left\|X^\cA_{\neg a}(t , 0, y,
    z) -X^\cA_{\neg a}(t , r, y)\right\|_{\bfs_{\eta,p}}},
\end{multline}
where $a$ is any chosen index of $\cA$. By the definition of the norm on
$\bfs_{\eta,p}$, we have
   %  Thus, according to \eqref{eq_donsker:13} and Cauchy-Schwarz inequality, we get
%     \begin{multline}
% \label{eq_donsker:7}      \Bigl|  \esp{LP_tF(X^\cA)}\Bigr| \le 2 \left( \frac{1}{m} \right)^{2(\zeta-1/2)}
%       \frac{e^{-3t/2}}{\beta_{t/2}^2} \int_{\fs_{\eta,2}}\|y\|_{\fs_{\eta,2}}^{2}\dif \P_{\eta,2}^{V}(y) 
%       \\
%       \shoveleft{ \times\sum_{a\in \cA} \sup_{r\in [0,1]}\mathbf E\Biggl[ (1+|X_a|^2)}\\
%       \times\biggl(  \iint_{\fs_{\eta,p}}\Bigl| F(X^\cA_{\neg a}\bigl(t , 0, y,
%         z)\bigr)-F\bigl(X^\cA_{\neg a}(t , r, y)\bigr) \Bigr|^{2} \dif \P_{\eta,p}^{V}(y)\dif \P_{\eta,p}^{V}(z) \biggr)^{1/2}\Biggr].
%     \end{multline}
%     Since $F$ is Lipschitz continuous, it all boils down to estimate
    \begin{multline*}
\sup_{\substack{r\in [0,1]\\ y,z\in \fs_{\eta,p}}} \esp{(1+|X_a|^2) \left\|X^\cA_{\neg a}(t , 0, y,
    z) -X^\cA_{\neg a}(t , r, y)\right\|_{\bfs_{\eta,p}}}\\
\shoveleft{= \sup_{\substack{r\in [0,1]\\ y,z\in \fs_{\eta,p}}} \esp{(1+|X_a|^2) \left\|X^\cA_{\neg a}(t , 0, y,
    z) -X^\cA_{\neg a}(t , r, y)\right\|_{\fs_{\eta,p}}}}\\
\shoveleft{ +\sup_{\substack{r\in [0,1]\\ y,z\in \fs_{\eta,p}}} \mathbf E\Biggl[ (1+|X_a|^2) \biggl(  \iint \Bigl(         \pi_2\lS (X^\cA_{\neg a}(t , r, y)) } \\
       \shoveright{     -\pi_2\lS( X^\cA_{\neg a}(t , 0, y))
         \Bigr)_{u,v}^{p/2}\dif\mu_{\eta,p}(u,v) \biggr)^{1/p}\Biggr]}\\
         =A_{1}+A_{2}.
  \end{multline*}
  According to \eqref{eq_donsker:3} and \eqref{eq_donsker:4},
  \begin{equation}
    \label{eq_donsker:5}
    |A_{1}|\le e^{{-t}}\esp{(1+|X_{a}|^{2})|X_{a}| }\|h_{a}^{m}\|_{\fs_{\eta,p}}\le 2\,\esp{|X_{a}|^{3}}\, m^{-(1/2-\eta)}.
  \end{equation}

        Furthermore,
      \begin{multline*}
\Bigl(         \pi_2\lS (X^\cA_{\neg a}(t , r, y)) -\pi_2\lS (X^\cA_{\neg a}(t , 0, y)) \Bigr)_{u,v}\\
        =r\,X_a(t,y)\sum_{b\prec a}X_b(t,y)\, h_{a,b}^m(u,v),
      \end{multline*}
      where
      \begin{equation*}
        X_a(t,y)=e^{-t}X_a+\beta_{t}y. 
      \end{equation*}
      Hence, according to H\"older inequality,
      % 1-2/p=(p-2)/p=> 1/q=(p-2)/p et q=p/(p-2)
      \begin{multline*}
        |A_{2}|\le c\,\esp{|X_{a}|^p}^{{2/p}}\\
        { \times \mathbf E\Biggr[\biggl(\iint\Bigl|  X_a(t,y)\sum_{b\prec a}X_b(t,y)\,
        h_{a,b}^m(u,v)}\Bigr|^{p/2} \dif\mu_{\eta,p}(u,v)
      \biggr)^{1/(p-2)}\Biggr]^{(p-2)/p}\\
      \shoveleft{ \le  c\,\esp{|X_{a}|^p}^{{2/p}}}\\
       {\times  \mathbf E\Biggr[\iint\Bigl|  X_a(t,y)\sum_{b\prec a}X_b(t,y)\,
        h_{a,b}^m(u,v)}\Bigr|^{p/2} \dif\mu_{\eta,p}(u,v)
      \Biggr]^{1/p}.
    \end{multline*}
    Lemma \ref{lem:majo_XaXb} implies that there exists $c>0$ such that
    for any $y\in \fs_{\eta,p}$,
    \begin{multline}\label{eq_donsker:6}
       |A_{2}|\le c\,\esp{|X_{a}|^p}^{{2/p}} \esp{|X_{a}|^{p/2}}^{2/p}\,
       m^{-(1/2-\eta)}\\ \le c\,\esp{|X_{a}|^p}^{{3/p}} \, m^{-(1/2-\eta)}.
     \end{multline}
     Since  $|\cA|=d.m$,
     plug \eqref{eq_donsker:5} and \eqref{eq_donsker:6} into \eqref{eq_donsker:7} to obtain the
     existence of $c>0$ such that for any $t>0$,
     \begin{equation}\label{eq_donsker:8}
        \Bigl|  \esp{LP_tF(X^\cA)}\Bigr|\le c\,e^{-2t}\esp{|X_{a}|^p}^{{3/p}} \, m^{-(1/2-\eta)}.
      \end{equation}
In view of \eqref{eq_donsker_wiener:2}, by integration over $\R^{+}$, we get
\eqref{eq_donsker:16} and the proof is complete. 
    \end{proof}

\providecommand{\bysame}{\leavevmode\hbox to3em{\hrulefill}\thinspace}
\providecommand{\MR}{\relax\ifhmode\unskip\space\fi MR }
% \MRhref is called by the amsart/book/proc definition of \MR.
\providecommand{\MRhref}[2]{%
  \href{http://www.ams.org/mathscinet-getitem?mr=#1}{#2}
}
\providecommand{\href}[2]{#2}

% \bibliography{heisenberg}
% \bibliographystyle{amsplain}

\end{document}